 \newcommand{\zit}[1]{(\ref{#1})}
 \numberwithin{equation}{section}
 \theoremstyle{plain}
 \newtheorem{theorem}{Theorem}[section]
 \newtheorem{lemma}[theorem]{Lemma}
 \newtheorem{proposition}[theorem]{Proposition}
   \theoremstyle{definition}
  \newtheorem{definition}[theorem]{Definition}
   \newtheorem{example}[theorem]{Example}
   \newtheorem{remark}[theorem]{Remark}
 \def\bsr{\operatorname{bsr}}
\def\tsr{\operatorname{tsr}}
 \def\R{ \mathbb R}
 \def\C{{ \mathbb C}}
 \def\Z{{ \mathbb Z}}
 \def\N{{ \mathbb N}}
 \def\B{{ \mathbf B}}
 \def\e{\varepsilon}
 \def\sp {\quad}
 \def\ssc{\scriptscriptstyle}
 \def\sc{\scriptstyle}
 \def\bs{\boldsymbol}
 \def\dis{\displaystyle}
 \def\union{\cup}
 \def\Union{\bigcup}
 \def\inter{\cap}
 \def\Inter{\bigcap }
 \def\ov{\overline}
 \def\ss{\subseteq}
 \def\emp{\emptyset}
 \def\buildrel#1_#2^#3{\mathrel{\mathop{\kern 0pt#1}\limits_{#2}^{#3}}}
 \def\imp{\Longrightarrow}
\begin{document}

 \title [The polynomial ring]
 {The ring of real-valued multivariate polynomials:
   an analyst's perspective}

 %\thanks{}
 
 \author{Raymond Mortini}
  \address{
   \small Universit\'{e} de Lorraine\\
\small D\'{e}partement de Math\'{e}matiques et  
Institut \'Elie Cartan de Lorraine,  UMR 7502\\
\small Ile du Saulcy\\
 \small F-57045 Metz, France} 
 \email{raymond.mortini@univ-lorraine.fr}

\author{Rudolf Rupp}
\address{ Fakult\"at Allgemeinwissenschaften\\
\small Georg-Simon-Ohm-Hochschule N\"urnberg\\
\small Kesslerplatz 12\\
\small D-90489 N\"urnberg, Germany
}
\email  {Rudolf.Rupp@ohm-hochschule.de}

 \subjclass{Primary 46E25, Secondary 13M10, 26C99}

 \keywords{ring of real polynomials; Bass stable rank; topological stable rank; prime ideals;
 Krull dimension}
 
 \begin{abstract}
 In this survey we determine an explicit set of generators of the maximal ideals
 in the ring $\R[x_1,\dots,x_n]$ of polynomials in $n$ variables with real coefficients
 and give an easy analytic proof of the Bass-Vasershtein theorem on the Bass stable
 rank of $\R[x_1,\dots,x_n]$. The ingredients of the proof stem from different publications
 by Coquand, Lombardi, Estes and Ohm. We conclude with a calculation of the topological stable
 rank of  $\R[x_1,\dots,x_n]$, which seems to be unknown so far.
  \end{abstract}

  \maketitle

 %\centerline {\small\the\day.\the \month.\the\year} \medskip
 \section*{Introduction}
 
 In his seminal paper \cite[Theorem 8]{va},  that paved the way to all future
 investigations of the Bass stable rank for function algebras,
 L. Vasershtein  deduced from a Theorem of  H. Bass \cite{ba}
 (see below)  that  the Bass stable rank of the ring of real polynomials in $n$ variables 
 is $n+1$. Since for an analyst Bass' fundamental paper is very hard to understand
 it is desirable to develop an analytic  proof of Bass' important result that  can  easily be read.  
 This was done in a paper by Estes and Ohm \cite{eo}. The whole depends on the determination
 of the Krull dimension of $\R[x_1,\dots,x_n]$, the  known proofs prior to 2005  were rather
 involved. But
 also here, a nice elementary proof had been developed around 2005 by
  Coquand and Lombardi \cite{colo}.
 Their  short proof depends on  the standard algebraic tool  of ``localization of rings".
 We shall replace this by a direct construction of a chain of prime ideals of length $n$
and obtain in this way   an entirely analytic proof of the Bass-Vasershtein Theorem.
The only tool used in the proof will now be Zorn's Lemma. 
In our survey we present all these proofs so that  it will be entirely self-contained;
it will no longer be necessary to look up half a dozen papers in order to admire
this nice result by Bass and Vasershtein. We conclude the paper with a  result we could not trace
in the literature: the
determination of the topological stable rank of $\R[x_1,\dots,x_n]$: every $(n+1)$-tuple of 
real-valued polynomials can be uniformly approximated on $\R^n$ be invertible $(n+1)$-tuples
in $\R[x_1,\dots,x_n]$.

This survey forms part of an ongoing textbook project on stable ranks of function algebras,
due to be finished only in a couple of years from now (now = 2013).  Therefore we decided
to  make this chapter already  available to the mathematical community (mainly for readers
of this Proceedings and for master students interested  in function theory and function algebras).
 
 \section{The maximal ideals of $\R[x_1,\dots,x_n]$}
 Associated with   $\R[x_1,\dots,x_n]$ is the following algebra of {\it real-symmetric polynomials}:
$$\C_{{\rm sym}}[z_1,\dots, z_n]=\left\{f\in \C[z_1,\dots, z_n]: f(z_1,\dots, z_n)= 
\ov{f(\ov z_1,\dots, \ov z_n)}\; \;\forall (z_1,\dots,z_n)\in \C^n\;\right\}.$$
For shortness we write $\bs z$ for the  $n$-tuples $(z_1,\dots, z_n)$ and $\ov {\bs z}$
for $(\ov z_1,\dots, \ov z_n)$.

\begin{lemma}\label{symvsreal}
$\C_{{\rm sym}}[z_1,\dots, z_n]$ is a real algebra of complex-valued polynomials 
 that is real-isomorphic to $\R[x_1,\dots, x_n]$.
\end{lemma}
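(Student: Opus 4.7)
The plan is in two steps: verify that $\C_{\rm sym}[z_1,\dots,z_n]$ is closed under the real-algebra operations, then exhibit the obvious inclusion $\R[x_1,\dots,x_n]\hookrightarrow\C[z_1,\dots,z_n]$ and show its image is exactly $\C_{\rm sym}[z_1,\dots,z_n]$.

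For the first step I would use the standard identities $\overline{a+b}=\bar a+\bar b$, $\overline{ab}=\bar a\bar b$, and $\overline{\lambda a}=\lambda\bar a$ for $\lambda\in\R$. If $f,g\in\C_{\rm sym}[z_1,\dots,z_n]$ and $\lambda\in\R$, then evaluating $f+g$, $fg$ and $\lambda f$ at $\bs z$ and using the defining equation $h(\bs z)=\overline{h(\ov{\bs z})}$ for $h=f,g$ shows directly that all three lie in $\C_{\rm sym}[z_1,\dots,z_n]$. Note closure fails for multiplication by a non-real scalar, which is why we only obtain a real algebra. The constant polynomial $1$ obviously belongs to it.

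For the isomorphism, define $\Phi:\R[x_1,\dots,x_n]\to\C[z_1,\dots,z_n]$ by reinterpreting a real polynomial $p=\sum_\alpha c_\alpha x^\alpha$ as the complex polynomial $\Phi(p)(\bs z)=\sum_\alpha c_\alpha \bs z^\alpha$. Since each $c_\alpha\in\R$,
\[
\overline{\Phi(p)(\ov{\bs z})}=\sum_\alpha\overline{c_\alpha}\,\bs z^\alpha=\sum_\alpha c_\alpha\,\bs z^\alpha=\Phi(p)(\bs z),
\]
so $\Phi$ lands in $\C_{\rm sym}[z_1,\dots,z_n]$ and is clearly a real-algebra homomorphism. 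Injectivity of $\Phi$ follows from the uniqueness of coefficients in a polynomial ring.

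The one step that requires actual work is surjectivity. Given $f=\sum_\alpha c_\alpha \bs z^\alpha\in\C_{\rm sym}[z_1,\dots,z_n]$ with $c_\alpha\in\C$, compute
\[
\overline{f(\ov{\bs z})}=\overline{\sum_\alpha c_\alpha\ov{\bs z}^{\,\alpha}}=\sum_\alpha\overline{c_\alpha}\,\bs z^\alpha,
\]
and compare with $f(\bs z)=\sum_\alpha c_\alpha\bs z^\alpha$. The uniqueness of polynomial coefficients in $\C[z_1,\dots,z_n]$ then forces $c_\alpha=\overline{c_\alpha}$, i.e.\ $c_\alpha\in\R$, so $f=\Phi(p)$ with $p=\sum_\alpha c_\alpha x^\alpha\in\R[x_1,\dots,x_n]$. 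I do not anticipate any real obstacle; the only delicate point is this coefficient comparison, which is routine.
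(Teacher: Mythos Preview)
Your proof is correct and follows essentially the same approach as the paper. The only cosmetic difference is direction: the paper defines the restriction map $\rho:\C_{\rm sym}[\bs z]\to\R[\bs x]$, $p\mapsto p|_{\R^n}$, and shows it is well defined by the same coefficient-comparison argument you use for surjectivity of $\Phi$; your $\Phi$ is simply the inverse of $\rho$.
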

\begin{proof}
It is easy to see that $\C_{{\rm sym}}[\bs z]$ is a real algebra.
Let $\rho: \C_{{\rm sym}}[\bs z]\to \R[\bs x]$ be the restriction map $p\mapsto p|_{\R^n}$.
Note that $\rho$ is well defined, since the coefficients of a polynomial in $\C_{{\rm sym}}[\bs z]$
are real; in fact if
$$p(\bs z)=\sum_{\bs n\in I}  a_{\bs n} \mathbf z^{\bs n}\in \C_{{\rm sym}}[\bs z],$$
then 
$$
\ov{p(\ov{\bs z})}= \sum_{\bs n\in I} \ov{ a_{\bs n}} \mathbf z^{\bs n}
= \sum_{\bs n\in I}  a_{\bs n} \mathbf z^{\bs n}.
$$
The uniqueness of the coefficients implies that $\ov{ a_{\bs n}}= a_{\bs n}$. Hence
$ a_{\bs n}\in \R$.
The rest is clear.
\end{proof}

For $\bs a\in  \C^n$ let 
$$M_{\bs a}:=\{p\in  \C[z_1,\dots,z_n]: p(\bs a)=0\}.$$
By Hilbert's Nullstellensatz (see for instance \cite{lang} or \cite{kap})
an ideal in $\C[z_1,\dots,z_n]$ is maximal if and only if
it has the form $M_{\bs a}$ for some $\bs a\in \C^n$. 
This will be used  in the following to determine the class of maximal ideals in $\C_{{\rm sym}}[z_1,\dots, z_n]$.
 
 \begin{theorem}\label{maxidi}%\label{maxisympol1}
The class of maximal ideals of  $\C_{{\rm sym}}[z_1,\dots, z_n]$ coincides with the class
of ideals of the form 
$$S_{\bs a}:=M_{\bs a}\inter M_{\ov{\bs a}}\inter \C_{{\rm sym}}[z_1,\dots, z_n],$$
where $\bs a\in \C^n$.
The set  $\{\bs a, \ov{\bs a}\}$ is uniquely determined for a given maximal ideal.
\end{theorem}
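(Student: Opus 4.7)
I would deduce the theorem from Hilbert's Nullstellensatz in $\C[\bs z]$ by extending an ideal of $\C_{\rm sym}[\bs z]$ to $\C[\bs z]$ and contracting back. The structural fact underpinning this is the unique decomposition
$$\C[\bs z] = \C_{\rm sym}[\bs z] \oplus i\,\C_{\rm sym}[\bs z]$$
as real vector spaces, obtained by writing $f = g + ih$ with $g(\bs z) = \tfrac{1}{2}\bigl(f(\bs z) + \ov{f(\ov{\bs z})}\bigr)$ and $h(\bs z) = \tfrac{1}{2i}\bigl(f(\bs z) - \ov{f(\ov{\bs z})}\bigr)$; one checks directly that $g,h \in \C_{\rm sym}[\bs z]$ and that the decomposition is unique.

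\textbf{Maximality of $S_{\bs a}$.} I would consider the evaluation map $\phi_{\bs a}: \C_{\rm sym}[\bs z] \to \C$, $p \mapsto p(\bs a)$. Since $p(\ov{\bs a}) = \ov{p(\bs a)}$ for every $p \in \C_{\rm sym}[\bs z]$, vanishing at $\bs a$ forces vanishing at $\ov{\bs a}$; hence $\ker \phi_{\bs a} = M_{\bs a} \cap \C_{\rm sym}[\bs z] = S_{\bs a}$. The image contains all real constants and each $a_j$; if some $a_j \notin \R$, then $(z_j - \Re(a_j))/\Im(a_j) \in \C_{\rm sym}[\bs z]$ evaluates to $i$, so the image is $\C$; otherwise it is $\R$. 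Either way the quotient is a field and $S_{\bs a}$ is maximal.

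\textbf{Every maximal ideal is some $S_{\bs a}$.} Given a maximal ideal $I$ of $\C_{\rm sym}[\bs z]$, I would form its extension $\widetilde I := I \cdot \C[\bs z]$. Using the decomposition above, every element of $\widetilde I$ writes as $P + iQ$ with $P, Q \in I$. So if $1 \in \widetilde I$, the uniqueness of the trivial decomposition $1 = 1 + i \cdot 0$ forces $P = 1 \in I$, contradicting $I \neq \C_{\rm sym}[\bs z]$. Thus $\widetilde I$ is proper, and Hilbert's Nullstellensatz yields $\bs a \in \C^n$ with $\widetilde I \subseteq M_{\bs a}$; contracting back and invoking the symmetry $p(\ov{\bs a}) = \ov{p(\bs a)}$ again gives
$$I \subseteq M_{\bs a} \cap \C_{\rm sym}[\bs z] = S_{\bs a}.$$
Maximality of $I$ and properness of $S_{\bs a}$ force $I = S_{\bs a}$.

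\textbf{Uniqueness of $\{\bs a, \ov{\bs a}\}$ and main obstacle.} If $S_{\bs a} = S_{\bs b}$, the real polynomial $q_j(z_j) = (z_j - a_j)(z_j - \ov{a_j}) \in \C_{\rm sym}[\bs z]$ lies in $S_{\bs a} = S_{\bs b}$, so $b_j \in \{a_j, \ov{a_j}\}$ for every $j$. To preclude mixed choices when $a_j, a_k \notin \R$ for some $j \neq k$, I would use the symmetrized polynomial
$$p_{j,k}(\bs z) = (z_j - a_j)(z_k - \ov{a_k}) + (z_j - \ov{a_j})(z_k - a_k) \in \C_{\rm sym}[\bs z],$$
which vanishes at $\bs a$ but evaluates to $4\,\Im(a_j)\Im(a_k) \neq 0$ at any $\bs b$ with $b_j = a_j$ and $b_k = \ov{a_k}$; so the non-real coordinates must switch together, giving $\bs b \in \{\bs a, \ov{\bs a}\}$. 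The only non-routine step in the entire argument is the properness of $I \cdot \C[\bs z]$, which rests squarely on the uniqueness of the $\C_{\rm sym}[\bs z] \oplus i\C_{\rm sym}[\bs z]$ decomposition.
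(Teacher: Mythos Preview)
Your proof is correct and parallels the paper's argument closely, though you package the same ingredients more structurally. For maximality of $S_{\bs a}$ the paper writes down an explicit B\'ezout identity showing that any $f\notin S_{\bs a}$ together with $S_{\bs a}$ generates $1$, whereas you observe that $\C_{\rm sym}[\bs z]/S_{\bs a}$ is the image of the evaluation $\phi_{\bs a}$, hence a field. For the converse, the paper argues by contradiction---if $M$ avoided every $S_{\bs a}$ one would get a B\'ezout relation $\sum q_jp_{\bs a_j}=1$ in $\C[\bs z]$ and then symmetrizes the coefficients via $q_j^*=\tfrac{1}{2}\bigl(q_j+\ov{q_j(\ov{\bs z})}\bigr)$---while you extend $I$ to $\C[\bs z]$ and use the decomposition $\C[\bs z]=\C_{\rm sym}[\bs z]\oplus i\,\C_{\rm sym}[\bs z]$ to see the extension is proper; these are the same symmetrization trick viewed from two sides. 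For uniqueness both proofs first pin down $b_j\in\{a_j,\ov a_j\}$ via the real quadratic $(z_j-a_j)(z_j-\ov{a_j})$, and then rule out mixed conjugations with an explicit separating polynomial: the paper uses $\bigl((z_{i_0}-a_{i_0})+(z_{i_1}-a_{i_1})\bigr)\bigl((z_{i_0}-\ov{a_{i_0}})+(z_{i_1}-\ov{a_{i_1}})\bigr)$, you use the cross-term $p_{j,k}=(z_j-a_j)(z_k-\ov{a_k})+(z_j-\ov{a_j})(z_k-a_k)$; both evaluate to $4\,\Im a_j\,\Im a_k$ at the offending $\bs b$. Your version is slightly cleaner conceptually; the paper's is slightly more hands-on.
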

\begin{proof}
We first note that $M_{\bs a}\inter M_{\ov{\bs a}}\inter \C_{{\rm sym}}[\bs z]=
M_{\bs a}\inter \C_{{\rm sym}}[\bs z]$, because for every polynomial $p$  in 
$ \C_{{\rm sym}}[\bs z]$ it holds that $p(\bs a)=0$  if and only if $p(\ov{\bs a})=0$.

Next we show that the ideals $S_{\bs a}$ are maximal.   So 
suppose that $f\in \C_{{\rm sym}}[\bs z]$ does not vanish at $\bs a$. Then 
  $$\bigl(f-f(\bs a)\bigr)\;\bigl(f-\ov {f(\bs a)}\bigr)= f^2-\bigl(2{\rm Re}\; f(\bs a)\bigr)\, f +|f(\bs a)|^2
  \in S_{\bs a}$$ 
  and 
 $$
  1=\frac{\bigl(f-f(\bs a)\bigr)\;\bigl(f-\ov {f(\bs a)}\bigr)}{|f(\bs a)|^2}- f \;
  \frac{ f-(f(\bs a)+\ov {f(\bs a)})}{|f(\bs a)|^2}.
  $$
  Hence the ideal, $I_{\C_{{\rm sym}}[\bs z]}(S_{\bs a},f)$, generated by $S_{\bs a}$ 
  and $f$ is the whole algebra and so $S_{\bs a}$
   is maximal.
  We note that in the case where $f(\bs a)$ is real, we simply could argue as follows, since the
  constant functions $\bs z\mapsto f(\bs a)$ and $\bs z\mapsto 1/f(\bs a)$ then belong to $\C_{{\rm sym}}[\bs z]$:
  $$1=-\frac{f-f(\bs a)}{f(\bs a)} + \frac{f}{f(\bs a)} \in I_{\C_{{\rm sym}}[\bs z]}(S_{\bs a},f).$$
  
  It remains to show that every maximal ideal $M$  in $\C_{{\rm sym}}[\bs z]$  coincides with
  $S_{\bs a}$ for some $\bs a\in \C^n$.
  Suppose, to the contrary, that $M$ is not contained in any ideal of the form $S_{\bs a}$.
  Hence, for every $\bs a\in \C^n$, there is $p_{\bs a}\in M$ such that  $p_{\bs a}(\bs a)\not=0$.
By Hilbert's Nullstellensatz,
 the ideal generated by  the set $S=\{p_{\bs a}: \bs a\in \C^n\}$
in $\C[\bs z]$ coincides with $\C[\bs z]$.  Hence there are $q_j\in \C[\bs z]$ and finitely
many $\bs a_j\in \C^n$,  $(j=1,\dots, N)$,  such that
$$ \sum_{j=1}^N q_j \,p_{\bs a_j}=1.$$
Now, by taking complex conjugates, and using the fact that  
$p_{\bs a_j}\in M\ss \C_{{\rm sym}}[\bs z]$,
we get
$$1=\ov{\sum_{j=1}^N q_j(\ov{\bs z}) \, p_{\bs a_j}(\ov{\bs z})}= 
 \sum_{j=1} \ov{q_j(\ov{\bs z})}\,  p_{\bs a_j}(\bs z).$$
 Hence, with $$q_j^*(\bs z) =\frac{1}{2}\bigl(\ov{q_j(\ov{\bs z})} + q_j(\bs z)\bigr),$$
 we conclude that
 $$\sum_{j=1}^N q_j^* \, p_{\bs a_j}=1.$$
 Since $q_j^*\in \C_{{\rm sym}}[\bs z]$ and $p_{\bs a_j}\in M$ 
 we obtain the contradiction that $1\in M$. Thus $M\ss S_{\bs a}$ for some $\bs a\in\C^n$. 
 The maximality of $M$ now implies that $M=S_{\bs a}$.
 
Finally we show the uniqueness of  $\{\bs a,\ov{\bs a}\}$. So suppose that 
$\bs b\not\in \{\bs a, \ov{\bs a}\}$. 

{\sl Case 1} There is an index $i_0$ such that $b_{i_0}\notin\{a_{i_0}, \ov{a_{i_0}}\}$.
Then the polynomial $p$, given by
$$p(z_1,\dots, z_n)=(z_{i_0}-a_{i_0})(z_{i_0}-\ov{a_{i_0}})$$
belongs to $ \C_{{\rm sym}}[z_1,\dots,z_n]$,
vanishes at $\bs a$, but not at $\bs b$
 (if $a_{i_0}\in \R$, then it suffices to take
$p(z_1,\dots, z_n)=z_{i_0}-a_{i_0}$).

{\sl Case 2} There are two indices   $i_0$  and $i_1$ such that  $a_{i_\nu}\notin \R$, 
$b_{i_0}=\ov {a_{i_0}}$ and  $b_{i_1}=a_{i_1}$.
Then the polynomial $q$ given by
$$q(z_1,\dots,z_n)= \Bigl((z_{i_{0}}-a_{i_0})+ (z_{i_1}-a_{i_1})\Bigr) \;\cdot\;
 \Bigl((z_{i_{0}}-\ov{a_{i_0}})+ (z_{i_1}-\ov{a_{i_1}})\Bigr)
$$ 
belongs to $ \C_{{\rm sym}}[z_1,\dots,z_n]$, vanishes at $\bs a$, but not at $\bs b$.

Hence, in both cases,  $p\in S_{\bs a}\setminus S_{\bs b}$.
There are no other cases left. 
\end{proof}

\begin{theorem}\label{bezpolreal}
The B\'ezout equation $\sum_{j=1}^k q_jp_j=1$ admits a solution in the ring
$R=\C_{{\rm sym}}[z_1,\dots,z_n]$ or $\R[x_1,\dots,x_n]$
if and only if the polynomials $p_j$ do not have a common zero in $\C^n$.
\end{theorem}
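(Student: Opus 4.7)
The plan is to reduce the statement to Theorem \ref{maxidi} on the structure of the maximal ideals of $\C_{\rm sym}[\bs z]$. First I would dispatch the easy direction: if $\sum_{j=1}^k q_j p_j = 1$ and $\bs a \in \C^n$ were a common zero of all the $p_j$, evaluation at $\bs a$ yields $0 = 1$, a contradiction. This works in either ring, since a real polynomial and its complex polynomial extension vanish at the same points of $\R^n$.

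For the substantive direction I would work first in $R = \C_{\rm sym}[\bs z]$. Suppose $p_1, \dots, p_k \in \C_{\rm sym}[\bs z]$ have no common zero in $\C^n$, and let $I = I_{\C_{\rm sym}[\bs z]}(p_1,\dots,p_k)$. Assuming for contradiction $1 \notin I$, the ideal $I$ is proper in the commutative unital ring $\C_{\rm sym}[\bs z]$, so Zorn's Lemma furnishes a maximal ideal $M \supseteq I$. By Theorem \ref{maxidi}, $M = S_{\bs a}$ for some $\bs a \in \C^n$. But then each $p_j$ lies in $S_{\bs a} \ss M_{\bs a}$, whence $p_j(\bs a)=0$ for all $j$, contradicting the hypothesis. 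Therefore $1 \in I$, producing the desired B\'ezout identity.

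To handle the ring $\R[\bs x]$, I would transfer via the real isomorphism $\rho\colon p \mapsto p|_{\R^n}$ of Lemma \ref{symvsreal}. A B\'ezout identity $\sum \tilde q_j \tilde p_j = 1$ in $\R[\bs x]$ corresponds under $\rho^{-1}$ to the identity $\sum q_j p_j = 1$ in $\C_{\rm sym}[\bs z]$, where $p_j \in \C_{\rm sym}[\bs z]$ is the unique complex-polynomial extension of $\tilde p_j$. Since polynomials are determined by their values on $\R^n$, the family $\{p_j\}$ has the same zero set in $\C^n$ as $\{\tilde p_j\}$ (interpreted via the polynomial extension), so the two formulations of the hypothesis align, and the $\C_{\rm sym}[\bs z]$ case yields the $\R[\bs x]$ case.

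The main obstacle is purely organizational rather than conceptual: essentially all the substantive work has already been absorbed into Theorem \ref{maxidi}, which identifies maximal ideals of $\C_{\rm sym}[\bs z]$ with conjugate pairs $\{\bs a, \ov{\bs a}\}$ in $\C^n$. The only point requiring care is that one must form the ideal inside $\C_{\rm sym}[\bs z]$ rather than inside $\C[\bs z]$ (where Hilbert's Nullstellensatz would give the conclusion trivially), and then verify via Theorem \ref{maxidi} that the smaller ring nevertheless has "enough" maximal ideals to detect the absence of common zeros.
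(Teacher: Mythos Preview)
Your proposal is correct and takes essentially the same approach as the paper, reducing the substantive direction to Theorem~\ref{maxidi}. The only cosmetic difference is in the easy direction: the paper invokes the identity theorem for holomorphic functions to pass from $\R^n$ to $\C^n$, whereas you argue (equally validly) by evaluating the formal polynomial identity at~$\bs a$.
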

\begin{proof}
By the identity  theorem for holomorphic functions of several complex variables, the condition $\sum_{j=1}^k q_jp_j=1$ on $\R^n$ implies that the same equality holds
on $\C^n$. Hence the given polynomials $p_j$ do not have a common zero in $\C^n$.\\
Conversely, if the $p_j$ do not have a common zero in $\C^n$ then, by Theorem
\ref{maxidi}, the ideal generated by  the 
 $p_j$ in $R$ cannot be a proper ideal.
Hence there are $q_j\in R$ such that $\sum_{j=1}^k q_jp_j=1$.
\end{proof}

%%%%%
We shall now determine an explicit class of generators for the maximal ideals
in $\R[x_1,\dots,x_n]$. Recall that
by Hilbert's Nullstellensatz  the maximal ideals in $\C[z_1,\dots,z_n]$ are generated by $n$ polynomials of the form $z_1-a_1,\dots, z_n-a_n$, where 
$\bs a:=(a_1,\dots, a_n)\in \C^n$.
The situation for the real algebra $\R[x_1,\dots,x_n]$ is quite different. Here are some examples that will reflect the general situation dealt with below. We identify $\R[x_1,\dots,x_n]$ with
$\C_{{\rm sym}}[z_1,\dots,z_n]$. The proof of the assertions is left as an exercise to the reader.
%%%%
\begin{example}\label{csym}\hfill
\begin{enumerate}
\item [(1)]  Let $\sigma\in \C\setminus \R$ and $r_j\in \R$, $j=1,2, \dots, n-1$.
Then the ideal generated  by $$x_n^2-(2\,{\rm Re}\; \sigma)\;x_n +|\sigma|^2$$ and
$x_j-r_j$,   $ (j=1,\dots, n-1)$, is maximal in  $\R[x_1,\dots, x_n]$.
It corresponds to the ideal  $S_{(r_1,\dots,r_{n-1},\sigma)}$.
\item [(2)] The ideal $I_{\R[x,y]}( 1+x^2, 1+y^2)$ generated by $1+x^2$ and $1+y^2$
is not maximal.
\item[(4)]  The ideal $M:=I_{\R[x,y]}( 1+x^2, 1+y^2, 1+xy, x-y)$ is maximal
and corresponds  to  $S_{(i,i)}$.
\item[(5)]  The following representations hold:
\begin{eqnarray*}
M&=&I_{\R[x,y]}( 1+x^2, 1+y^2, x-y)\\
&=& I_{\R[x,y]}( 1+x^2, 1+y^2, 1+xy)\\
&=& I_{\R[x,y]}( 1+xy, x-y).
\end{eqnarray*}
\end{enumerate}
\end{example}

 \begin{theorem}\label{maxireal}
Modulo a re-enumeration of the indices, the maximal ideals 
$M$ of $R:=\R[x_1,\dots,x_n]$ are generated by polynomials  of the form
$$p_j:=x_j-r_j, \; (j=1,\dots, k),$$
$$p_{k+j}:= x_{k+j}^2 -(2{\rm Re}\; a_{j})\;x_{k+j} +|a_{j}|^2, \; (j=1,\dots, m)$$
 {\rm (} $r_j \in\R$, $a_j\in \C\setminus \R$,  $k+m=n$ {\rm )},
and $2^{n-k}-2$ multilinear polynomials $q_j$ in $\R[x_{k+1},\dots, x_n]$
vanshing at $a_{k+1}, \dots, a_n$.
More precisely, we have
$$M= \sum_{j=1}^n p_j(x_j) R +  \sum_{j=1}^{2^{n-k}-2} q_j(x_{k+1},\dots, x_n)\;\R.$$
\end{theorem}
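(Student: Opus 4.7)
The plan is to invoke Theorem \ref{maxidi} to write $M=S_{\bs a}$ for some $\bs a=(a_1,\dots,a_n)\in\C^n$ and then to analyze the quotient of $R$ by the ``obvious'' sub-ideal generated by the $p_j$'s. After permuting indices, I may assume that $a_1,\dots,a_k\in\R$ (writing $r_j=a_j$) while $a_{k+1},\dots,a_n\in\C\setminus\R$. Each polynomial $p_j$ defined in the statement then lies in $S_{\bs a}$, since it vanishes at $\bs a$ by construction. Set $I:=\sum_{j=1}^n p_j\,R$ and $Q:=R/I$; the goal is to show that $M/I$ is a real vector space of dimension exactly $2^{n-k}-2$ and to pick multilinear polynomials as representatives for a basis of it.

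The key step is a dimension count for $Q$. Any $f\in R$ can be reduced modulo $I$ by substituting $x_j=r_j$ for $j\le k$ and by using the relation $x_{k+j}^2\equiv (2\,{\rm Re}\,a_j)\,x_{k+j}-|a_j|^2\pmod{I}$ to bring all exponents of $x_{k+1},\dots,x_n$ down to $0$ or $1$. Thus every class in $Q$ is represented by a multilinear polynomial in $x_{k+1},\dots,x_n$ with real coefficients, so $\dim_\R Q\le 2^{n-k}$. For the matching lower bound I would invoke the tensor-product isomorphism $Q\cong\bigotimes_{j=k+1}^n \R[x_j]/(p_j)\cong \C^{\otimes_\R(n-k)}$ of real algebras, combined with the standard identification $\C\otimes_\R\C\cong\C\oplus\C$ (which follows from factoring $(z-a)(z-\ov a)$ over $\C$ and applying the Chinese Remainder Theorem); iteration yields $\C^{\otimes_\R m}\cong\C^{2^{m-1}}$, of real dimension $2^m$, whence $\dim_\R Q=2^{n-k}$.

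Next I would consider the evaluation homomorphism $\mathrm{ev}_{\bs a}\colon Q\to\C$, $[f]\mapsto f(\bs a)$. It is well-defined because each $p_j(\bs a)=0$, it is an $\R$-algebra map, and (assuming $k<n$) it is surjective: it sends $1$ to $1$ and $x_{k+1}$ to $a_{k+1}\notin\R$, so its image contains all of $\C=\R+\R\, a_{k+1}$. Its kernel is precisely $M/I$, so $M/I$ has real codimension $2$ in $Q$, i.e., $\dim_\R(M/I)=2^{n-k}-2$. Since every coset in $Q$ has a multilinear representative, I choose multilinear polynomials $q_1,\dots,q_{2^{n-k}-2}$ in $x_{k+1},\dots,x_n$ with real coefficients whose cosets form an $\R$-basis of $M/I$. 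Each such $q_j$ lies in $M=S_{\bs a}$ and hence vanishes at $(a_{k+1},\dots,a_n)$ (and, being a real polynomial, at $(\ov{a_{k+1}},\dots,\ov{a_n})$ as well). By the basis property, $M=I+\sum_j q_j\,\R$, which is precisely the asserted representation.

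I expect the main obstacle to be the lower bound $\dim_\R Q\ge 2^{n-k}$: the upper bound and the surjectivity of $\mathrm{ev}_{\bs a}$ are elementary, but the matching lower bound requires the tensor-product/CRT argument above (or, equivalently, a Vandermonde-type argument showing that simultaneous evaluation at the $2^{n-k-1}$ pairwise-inequivalent conjugate tuples $(\epsilon_{k+1}a_{k+1},\dots,\epsilon_n a_n)$ separates the multilinear monomials). Once this is in place, passing from a basis of $M/I$ to an explicit generating set of the claimed form is immediate.
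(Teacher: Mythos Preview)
Your argument is correct and shares the paper's blueprint: invoke Theorem~\ref{maxidi} to write $M=S_{\bs a}$, reduce any $f\in M$ modulo the $p_j$ via Euclidean division to a multilinear remainder $r$ in $x_{k+1},\dots,x_n$, and note that $r$ must vanish at $(a_{k+1},\dots,a_n)$. Where you diverge is in the dimension count. You pass to the quotient $Q=R/I$ and use a tensor-product/CRT argument to establish $\dim_\R Q=2^{n-k}$, flagging the lower bound as the ``main obstacle.'' The paper bypasses this entirely: it works directly with the $2^m$-dimensional real vector space $V$ of multilinear polynomials in $m$ variables (the dimension is just the monomial count---no quotient, no linear-independence issue), observes that evaluation at $(a_{k+1},\dots,a_{k+m})$ is a surjective $\R$-linear map $V\to\C$, and concludes that its kernel $V^*$ has dimension $2^m-2$; since the remainder $r$ lands in $V^*$, any basis $\{q_1,\dots,q_{2^m-2}\}$ of $V^*$ does the job. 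Your route yields the extra structural information $Q\cong\C^{2^{m-1}}$, which is nice but not needed here; the paper's route is shorter and shows that what you identified as the main obstacle is in fact no obstacle at all.
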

\begin{proof}
Since $\R[x_1,\dots,x_n]$ is isomorphic to  $S:=\C_{{\rm sym}}[z_1, \dots, z_n]$,
it suffices to show that every maximal ideal $S_{\bs a}$ in  $S$ is generated 
by polynomials of the desired type (Theorem \ref{maxidi}).
Fix $\bs a\in \C^n$.  We may assume that 
$$\bs a=(r_1,\dots, r_k, a_{k+1}, \dots, a_{k+m}),$$
with $k+m=n$, where $r_j\in \R$  and $a_{k+1}, \dots, a_{k+m}\in \C\setminus \R$.
Note that $k$ or $m$ may be $0$.  Let $f\in S_{\bs a}$ and
$\bs z=(z_1,\dots, z_n)$.
By the Euclidean division procedure
$$f(\bs z)=\sum_{j=1}^n p_j(\bs z) q_j (\bs z)+ r(z_{k+1},\dots, z_{k+m}),$$ 
where $\deg_{z_j} r< 2$ for $k+1\leq j \leq k+m=n$. Hence $r$ is a multilinear polynomial
of the form 
$$r(z_{k+1},\dots,z_{k+m})=\sum_{\bs j} c_{\bs j} z_{k+1}^{j_1}\cdots z_{k+m}^{j_m},$$
$\bs j=(j_1,\dots,j_m)$, $j_\ell \in \{0,1\}$, $ c_{\bs j}\in \R$.
Moreover, $r(a_{k+1},\dots, a_{k+m})= f(\bs a)=0$.
Now the real vector-space, $V$,
of all  multilinear real-symmetric polynomials  in $m$ variables
has the algebraic dimension $2^{m}$.  Hence, the subspace $V^*$ of all $p\in V$ with 
$p(a_{k+1},\dots, a_{k+m})=p(\ov{a_{k+1}},\dots, \ov{a_{k+m}})=0$ has  dimension $2^{m}-2$.  
Let $\{q_1,\dots, q_{2^m-2}\}$ be
a basis of $V^*$.  Then
$$f\in \sum_{j=1}^n (p_j \,R) + \sum_{j=1}^{2^m-2 } (q_j\, \R).$$
\end{proof}

 We shall now unveil an explicit basis for $V^*$ whenever  $a_j=i$ for  every $j$.
 
 \begin{lemma}\label{basis}
 Let $\bs i=(i,\dots,i)\in \C^m$.  Then  a (vector-space) basis of
 $$V^*=\Bigl\{f(z_1,\dots, z_m)=\sum_{j_1,\dots, j_m} c_{\bs j} z_{k+1}^{j_1}\cdots z_{k+m}^{j_m},\;
j_\ell \in \{0,1\},\; c_{\bs j}\in \R,\;  f(\bs i)=0\Bigr\}$$
is given by
\begin{eqnarray*} x_1-x_j & &1<j \leq  m\\
1+x_{j_1}\, x_{j_2}&& 1\leq j_1<j_2\leq m\\
x_1+x_{j_1}\, x_{j_2}\, x_{j_3}& & 1\leq j_1<j_2<j_3\leq m\\
1-\prod_{\ell=1}^4 x_{j_\ell}&& 1\leq j_1<\dots<j_4\leq m\\
&&\\
x_1-\prod_{\ell=1}^5 x_{j_\ell}&& 1\leq j_1<\dots<j_5\leq m\\
\dots\dots&&\dots\dots
\end{eqnarray*}
The last element has exactly one of the following forms:
$$\begin{cases}\vspace{2mm}
 x_1-\prod_{j=1}^m x_j & \text{if $m\equiv 1 \mod 4$}\\\vspace{2mm}
 1\;\;+\prod_{j=1}^m x_j   & \text{if $m\equiv 2 \mod 4$}\\ \vspace{2mm}
  x_1+\prod_{j=1}^m x_j & \text{if $m\equiv 3 \mod 4$}\\
 1\;\;-\prod_{j=1}^m x_j & \text{if $m\equiv 0 \mod 4$}
\end{cases}
$$\end{lemma}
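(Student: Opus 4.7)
The strategy is to verify three things: every listed polynomial lies in $V^*$; their number equals $\dim V^* = 2^m-2$; and they are linearly independent over $\R$. Together these show that the list is a basis of $V^*$.

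I would fix the notation $x_J := \prod_{j \in J} x_j$ (with $x_\emptyset := 1$) for $J \subseteq \{1,\dots,m\}$, so that $\{x_J\}_J$ is the standard monomial basis of $V$ and $x_J(\bs i) = i^{|J|}$ depends only on $|J| \bmod 4$. A real combination $f = \sum_J c_J\, x_J$ vanishes at $\bs i$ iff $\sum_J c_J\, i^{|J|} = 0$, which splits into a real and an imaginary equation. Since ${\rm Re}(i^k)=0$ for $k$ odd and ${\rm Im}(i^k)=0$ for $k$ even, these two linear conditions involve disjoint sets of coefficients and are, for $m\geq 1$, both non-trivial, hence independent, so $\dim V^* = 2^m-2$ (this was already noted inside the proof of Theorem~\ref{maxireal}). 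Membership of each listed element in $V^*$ is then a one-line check using $i^k \in \{1,i,-1,-i\}$ according to $k \bmod 4$; the four cases for the ``last element'' are simply the instance of this pattern for the unique subset $J=\{1,\dots,m\}$ of size $m$. The count is
$$(m-1) + \binom{m}{2} + \binom{m}{3} + \cdots + \binom{m}{m} = (m-1) + (2^m - 1 - m) = 2^m - 2,$$
which matches the dimension.

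The heart of the argument is linear independence, which I would handle by a leading-term argument. Every element $b_J$ in the list has the shape $b_J = \tau_J + \varepsilon_J\, x_J$, where $\varepsilon_J = \pm 1$ and the ``tail'' $\tau_J$ lies in $\{\pm 1,\pm x_1\}$ and hence has degree at most $1$. Suppose $\sum_J \alpha_J b_J = 0$ in $V$. For every index $K$ with $|K|\geq 2$, the monomial $x_K$ can arise only from the leading term of $b_K$ (tails have degree $\leq 1$ and never produce such an $x_K$), so $\alpha_K = 0$. The relation then collapses to $\sum_{j>1} \alpha_{\{j\}} (x_1 - x_j) = 0$, and reading off the coefficient of $x_j$ gives $\alpha_{\{j\}} = 0$ for each $j > 1$.

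The one point requiring a little care is the intentional asymmetry in the first row, which lists $x_1-x_j$ only for $j>1$ and so excludes $J=\{1\}$. The two ``missing'' indices $\emptyset$ and $\{1\}$ correspond precisely to the two monomials $1$ and $x_1$ that appear throughout the list as tails and never as leading terms; this is exactly what allows the two independent constraints on $c_\emptyset$ and $c_{\{1\}}$ to cut out the right codimension and what makes the leading-term argument go through cleanly.
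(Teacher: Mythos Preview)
Your proposal is correct and follows essentially the same approach as the paper. The paper's proof is a terse three-line version of exactly your argument: it notes that each listed polynomial vanishes at $\bs i$, that there are $2^m-2$ of them, and that linear independence follows because the ``second summand'' of each element is a distinct monomial $x_1^{\e_1}\cdots x_m^{\e_m}$ with the two monomials $1$ and $x_1$ excluded --- which is precisely your leading-term argument.
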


 \begin{proof}
 All the polynomials above vanish at $\bs i$ (this is the reason for 
 their cyclic behaviour $\mod 4$).
 Moreover,  they are linear independent and there are exactly $2^m-2$  of them.
 Note that  the second summand has the form 
 $$ x_1^{\e_1}x_2^{\e_2}\cdots x_m^{\e_m},\sp \e_j\in\{0,1\}, $$
 the monomials $1=\prod_{j=1}^m x_j^0$ and $x_1=x_1\prod_{j=2}^m x_j^0$ being excluded. 
\end{proof} 
We conclude this section with the final form of the generators of the maximal ideals
in $\R[x_1,\dots,x_n]$. 
\begin{theorem}
Let $m+k=n$, $m\geq 2$,  and 
$\bs a:=( i,\dots, i, r_{m+1},\dots, r_{m+k})\in \C^m\times \R^{k}$.
The maximal ideal $S_{\bs a}$ of $\R[x_1,\dots,x_n]$ is generated 
 by the $2^{m}-2$  multilinear polynomials in Lemma \ref{basis} and the polynomials
 $$p_{m+j}:=x_{m+j}-r_{m+j}, \; (j=1,\dots, k).$$
\end{theorem}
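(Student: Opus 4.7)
Theorem~\ref{maxireal}, applied to $\bs a=(i,\dots,i,r_{m+1},\dots,r_{m+k})$ (after the re-enumeration that places the non-real coordinates first), asserts that $S_{\bs a}$ is generated by the quadratic polynomials $x_j^2+1$ for $j=1,\dots,m$, the linear polynomials $x_{m+j}-r_{m+j}$ for $j=1,\dots,k$, and any collection of $2^m-2$ multilinear polynomials forming a real basis of $V^*$. Since Lemma~\ref{basis} exhibits exactly such a basis, the only thing left to prove is that each $x_j^2+1$ already lies in the ideal $I$ generated by those basis polynomials together with the linear polynomials $x_{m+j}-r_{m+j}$; the reverse inclusion $I\subseteq S_{\bs a}$ is immediate because every listed generator vanishes on $\{\bs a,\ov{\bs a}\}$.

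\textbf{Key computation.} Among the polynomials listed in Lemma~\ref{basis} are $x_1-x_j$ for $1<j\leq m$ and the polynomial $1+x_1 x_2$, both of which exist precisely because $m\geq 2$. Working modulo $I$, the first family yields $x_j\equiv x_1$ for every $j=2,\dots,m$, while $1+x_1 x_2\in I$ yields $x_1 x_2\equiv -1$. Hence
\[
x_1^2 \;\equiv\; x_1\cdot x_2 \;\equiv\; -1 \pmod{I},
\]
so $x_1^2+1\in I$; squaring the congruence $x_j\equiv x_1\pmod{I}$ then gives $x_j^2+1\in I$ for every $j=1,\dots,m$.

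\textbf{Conclusion and main obstacle.} Combined with the fact that $x_{m+j}-r_{m+j}\in I$ by construction, this places every generator of $S_{\bs a}$ supplied by Theorem~\ref{maxireal} inside $I$, yielding $S_{\bs a}=I$. The only conceptual obstacle is that $V^*$ consists exclusively of multilinear polynomials, so no $\R$-linear combination of its basis can ever equal $x_j^2+1$; one must genuinely exploit the ambient ring multiplication, and the calculation above isolates the two specific basis elements---$x_1-x_2$ and $1+x_1 x_2$---that, together with the squaring operation inside $R$, produce the missing quadratic relations. This also makes transparent why the hypothesis $m\geq 2$ is indispensable: for $m=1$ there is simply no partner variable $x_2$ with which to form the product $1+x_1 x_2$.
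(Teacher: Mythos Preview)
Your proof is correct and follows essentially the same route as the paper's: both invoke Theorem~\ref{maxireal} to reduce the claim to showing that each quadratic polynomial $1+x_j^2$ lies in the ideal generated by the multilinear basis of Lemma~\ref{basis}, and both accomplish this by combining elements of the form $x_1-x_j$ with elements of the form $1+x_1x_\ell$. The paper writes out the explicit identities $1+x_j^2=-(x_1-x_j)x_j+(1+x_1x_j)$ and $1+x_1^2=(x_1-x_m)x_1+(1+x_1x_m)$, whereas you phrase the same computation as a congruence argument modulo $I$; the content is identical.
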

  
  \begin{proof}
  Using Theorem \ref{maxireal}, it suffices to show that the quadratic polynomials
  $1+x_j^2$, $j=1, \dots, m$,  belong to  the ideal generated  by the $2^{m}-2$  multilinear polynomials in Lemma \ref{basis}.
   This is clear, however, in view of the following relations:
  $$ \mbox{$1+x_j^2= -(x_1-x_{j}) x_j+ (1+x_1x_{j})$  for $j=2,\dots, m$},$$
  $$ 1+x_1^2= (x_1-x_m) x_1 + (1+x_1x_m).$$
  \end{proof}
  
  The general case of an arbitrary maximal ideal $S_{\bs a}$  is easily deduced by using the transformation
$$\chi(z_1, \dots, z_m)=\left( \dis \frac{z_1-\alpha_1}{\beta_1},\; \dots, \;  
\frac{z_m-\alpha_m}{\beta_m}\right)$$
of $\C^m$ onto $\C^m$, whenever
$$\bs a=(\alpha_1+i\beta_1, \dots, \alpha_m+i\beta_m, r_{m+1},\dots, r_{m+k})\in \C^m\times\R^k
\ss\C^n,$$
with $\beta_j\not=0$ for $j=1,\dots, m$.

Using more algebraic methods, it can be shown, that every maximal ideal in $R:=F[x_1,\dots, x_n]$ is generated by $n$ elements (see \cite[p. 20]{kap}), where $F$ is a field.  Finally, 
let us mention that $R$ is a Noetherian ring (this means that every ideal in $R$ is finitely
generated; this is Hilbert's basis theorem).

 \section{The Bass stable rank of $\R[x_1,\dots,x_n]$}
    
    \begin{definition}
   Let $R$ be  a commutative unital ring  with identity element 1. We assume that
   $1\not=0$, that is $R$ is not the trivial ring $\{0\}$.

\begin{enumerate}

\item[(1)] If $a_j\in R$, $(j=1,\dots, n)$,  then 
$$I_R(a_1,\dots,a_n):=\Bigl\{\sum_{j=1}^n x_j a_j: x_j\in R\Bigr\}$$
is the ideal generated by the $a_j$ in $R$.

\item [(2)] An $n$-tuple $(f_1,\dots,f_n)\in R^n$ is said to be {\it invertible} (or {\it unimodular}), 
  if there exists
 $(x_1,\dots,x_n)\in R^n$ such that the B\'ezout equation $\sum_{j=1}^n x_jf_j=1$
 is satisfied.
   The set of all invertible $n$-tuples is denoted by $U_n(R)$. Note that $U_1(R)=R^{-1}$.
   
 \parindent=0pt An $(n+1)$-tuple $(f_1,\dots,f_n,g)\in U_{n+1}(R)$ is  called {\it reducible}  
if there exists 
 $(a_1,\dots,a_n)\in R^n$ such that $(f_1+a_1g,\dots, f_n+a_ng)\in U_n(R)$.

 \item [(3)] The {\it Bass stable rank} of $R$, denoted by $\bsr R$,  is the smallest integer $n$ such that every element in $U_{n+1}(R)$ is reducible. 
 If no such $n$ exists, then $\bsr R=\infty$. 
  \end{enumerate}
  \end{definition}
  
  Note that if  $\bsr R=n$, $n<\infty$, and $m\geq n$,
 then every invertible $(m+1)$-tuple $(\bs f,g)\in R^{m+1}$ is reducible \cite[Theorem 1]{va}.\\
 
   Let $\N=\{0,1,2,\dots\}$. For the sequel, we make the  convention that the symbol 
 $\subset$  denotes {\it strict} inclusion.
 
 \begin{definition}
Let $R$ be a commutative unital ring, $R\not=\{0\}$.  
\begin{enumerate}
\item [(1)] A chain $\mathfrak C=\{I_0, I_1,\dots,  I_n\}$ 
of ideals in $R$  is  said to have {\it length $n$} ($n\in \N$), if 
$$I_0\subset  I_1\subset \dots\subset I_n,$$
the inclusions being strict.  We also call $\mathfrak C$ an  {\it $n$-chain}.
 Note the length of a chain $\mathfrak C$ counts
  the number of strict inclusions between the  members of $\mathfrak C$ and not the cardinal of
  $\mathfrak C$.
\item [(2)]   The {\it Krull dimension},  ${\rm Krd}\, R$,  of $R$ is defined to be the
supremum of the lengths of all increasing chains 
$$P_0\subset \cdots \subset P_n$$
of  {\it prime} ideals in $R$. Note that ${\rm Krd}\, R\in \{0,1,\dots,\infty\}$.

\end{enumerate}
\end{definition}

  Here is now Vasershtein's theorem (and  its  proof sketched in \cite{va}).
     
    \begin{theorem} [Vasershtein] \label{mainbsrpoly}
$$ \bsr \R[x_1,\dots,x_n]=n+1.$$ 
  \end{theorem}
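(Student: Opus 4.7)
The plan is to prove the equality $\bsr\R[x_1,\dots,x_n]=n+1$ by establishing the two opposite inequalities separately.

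For the upper bound $\bsr\R[x_1,\dots,x_n]\leq n+1$, I would invoke Bass's general theorem (to be established in a separate algebraic preamble of the survey) asserting $\bsr R\leq {\rm Krd}\, R+1$ for every commutative unital ring $R$. This reduces the task to showing ${\rm Krd}\,\R[x_1,\dots,x_n]=n$. The lower bound ${\rm Krd}\geq n$ is immediate from the explicit chain
$$ (0)\subset (x_1)\subset (x_1,x_2)\subset\cdots\subset(x_1,\dots,x_n),$$
whose members are prime because $\R[x_1,\dots,x_n]/(x_1,\dots,x_k)\cong\R[x_{k+1},\dots,x_n]$ is an integral domain. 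For the converse ${\rm Krd}\leq n$, I would follow the approach announced in the introduction: rather than the Coquand--Lombardi localization machinery, directly construct and analyze chains of prime ideals and use Zorn's Lemma to rule out any chain of length exceeding $n$, exploiting the fact that each strict inclusion $P_i\subset P_{i+1}$ must consume a transcendence degree of the fraction field of $R/P_i$ over $\R$.

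For the lower bound $\bsr\R[x_1,\dots,x_n]\geq n+1$, I would exhibit an invertible $(n+1)$-tuple that cannot be reduced. The natural candidate is
$$ P:=(x_1,x_2,\dots,x_n,\;1+x_1^2+\cdots+x_n^2), $$
with invertibility immediate from the B\'ezout identity $\sum_{j=1}^{n}(-x_j)\,x_j+1\cdot(1+\sum x_j^2)=1$. Assuming for contradiction that $P$ is reducible, one obtains $a_1,\dots,a_n\in\R[x_1,\dots,x_n]$ such that $(x_j+a_j g)_{j=1}^n$ is invertible, where $g:=1+\sum x_k^2$. By Theorem \ref{bezpolreal}, these polynomials have no common zero in $\C^n$, hence none in $\R^n$, so the polynomial map $F:\R^n\to\R^n\setminus\{0\}$ defined by $F(x)=(x_j+a_j(x)g(x))_j$ never vanishes. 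A topological argument (in the spirit of the Brouwer degree, or equivalently the non-retractability of $S^n$ onto $S^{n-1}$) then delivers the contradiction: the strict positivity and quadratic growth of $g$ allow a one-point compactification of $F$ in which the identity map is continuously deformed to a non-vanishing map, and the resulting mapping degree is incompatible with $F$ having no zeros.

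The main obstacle is two-fold. On the algebraic side, the upper bound ${\rm Krd}\,\R[x_1,\dots,x_n]\leq n$ is the most delicate ingredient, since bounding the length of \emph{every} chain of primes requires genuine structural information about the ring; Bass's general $\bsr\leq{\rm Krd}+1$ theorem is comparably intricate and rests on a finely tuned induction on dimension together with a prime-avoidance argument. On the analytic side, the non-reducibility argument for $P$ is the nontrivial half of the lower bound: while unimodularity is essentially trivial, producing a clean topological obstruction requires balancing the polynomial growth of $F$ at infinity against the positivity of $g$, and this is precisely where the distinction between $\R$ and $\C$ (over which the analogous bound $\bsr\C[z_1,\dots,z_n]\leq n$ holds by the Nullstellensatz) becomes visible.
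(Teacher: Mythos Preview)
Your overall strategy---split into the two inequalities, invoke Bass's theorem together with ${\rm Krd}\,\R[x_1,\dots,x_n]=n$ for the upper bound, and exhibit a concrete non-reducible $(n+1)$-tuple combined with a Brouwer-type obstruction for the lower bound---matches the paper exactly. Two points in the lower-bound half, however, are genuine gaps.

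\textbf{The sign of $g$.} The paper takes $g=1-\sum_j x_j^2$, not your $g=1+\sum_j x_j^2$, and this is not cosmetic. With the minus sign, $g$ vanishes precisely on the real unit sphere $S^{n-1}$; hence for any choice of $a_j$ the map $(x_j+a_jg)_j$ restricted to $S^{n-1}$ is the identity $(x_1,\dots,x_n)$. If the reduced tuple were invertible it would be zero-free on all of $\R^n$ (Theorem~\ref{bezpolreal}), so in particular on the closed ball, yielding a continuous zero-free extension of ${\rm id}:S^{n-1}\to\R^n\setminus\{0\}$ to $\B$---a one-line contradiction with Brouwer. With your $g=1+\sum_j x_j^2$ there is no real locus on which $F$ agrees with the identity, since $\{g=0\}\cap\R^n=\emptyset$. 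Your proposed compactification/degree argument does not go through as written: the natural homotopy $H_t(x)=x+t\,a(x)g(x)$ is not uniformly proper in $t$ (for $n=1$, $a\equiv 1$, one computes $H_{-1/x}(x)=-1/x\to 0$ as $x\to-\infty$), so it does not extend to a homotopy $S^n\to S^n$; and indeed $F(x)=x^2+x+1$ is zero-free on $\R$ without any contradiction arising. The non-reducibility of your tuple may well be true, but it does not follow from the real-topological sketch you give.

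\textbf{The aside on $\C$.} Your claim that $\bsr\C[z_1,\dots,z_n]\leq n$ ``by the Nullstellensatz'' is false already for $n=1$: the pair $(1+z,\,z^2)$ is unimodular in $\C[z]$ but not reducible (no polynomial $a$ makes $1+z+az^2$ a nonzero constant), so $\bsr\C[z]=2$. As the paper's Remark after Theorem~\ref{mainbsrpoly} records, only $\lfloor n/2\rfloor+1\le\bsr\C[\bs z]\le n+1$ is known.

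Two smaller remarks. Bass's inequality $\bsr R\le{\rm Krd}\,R+1$ in the form you state (for \emph{every} commutative unital ring) is not Bass's theorem; Bass requires $R$ Noetherian (Theorem~\ref{bass}), which is of course satisfied here. And for ${\rm Krd}\,\R[\bs x]\le n$ the paper does not use a transcendence-degree argument but rather the Coquand--Lombardi characterization (Theorem~\ref{clmain}) fed by Perron's algebraic-dependence theorem (Theorem~\ref{perron}); the ``Zorn's Lemma instead of localization'' in the introduction refers to the \emph{proof} of that characterization, not to bypassing it.
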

  
  \begin{proof}  
  We first show that  $\bsr \R[x_1,\dots,x_n]\geq n+1$.
    Consider the  invertible $(n+1)$-tuple
 $$(x_1,\dots, x_n, 1-\sum_{j=1}^n x_j^2)$$
 in $ \R[x_1,\dots,x_n]$.
This tuple cannot be reducible in $\R[x_1,\dots,x_n]\ss C(\R^n,\R)$, since otherwise
the $n$-tuple $(x_1,\dots, x_n)$, restricted to the unit sphere $\partial\B$ in $\R^n$, would have a
zero-free extension $\bs e$ to the unit ball $\B$, where   $\bs e$ is given by 
$$\Bigl(x_1+u_1\cdot (1-\sum_{j=1}^n x_j^2),\;\dots\;, x_n+u_n \cdot(1-\sum_{j=1}^n x_j^2)\Bigr)$$
for some $u_j\in \R[x_1,\dots,x_n]$. This means  that $\bs e$
does not take the value $(0,\dots, 0)$ on $\B$. This contradicts   Brouwer's result
 that  the identity map 
$$(x_1,\dots,x_n): \partial \B\to \partial\B$$
defined on the  boundary  of   the closed unit ball  $\B$ in $\R^n$
does not admit a zero-free  continuous   extension  to $\B$.

Next we prove that 
$\bsr  \R[x_1,\dots,x_n]\leq n+1$.
This follows from a combination of  Theorem \ref{bass} below, telling us that the Bass stable
rank of a Noetherian ring with Krull dimension $n$ is less than or equal to $n+1$,
and Theorem \ref{krudim},  according to which the Krull dimension of $\R[x_1,\dots, x_n]$ is
$n$.
\end{proof}

In the next section we shall now present  analytic proofs of both Theorems mentioned above.
They were given by Estes and Ohm  for Theorem \ref{bass} (\cite{eo})  and Coquand and Lombardi
for Theorem \ref{krudim} (\cite{colo}). 
 
\begin{remark}
Concerning the polynomial ring $\C[z_1,\dots,z_n]$,
 to the best of our knowledge, the exact value of the Bass stable rank for 
  $\C[z_1,\dots,z_n]$ is not yet known. Only estimates are available:
$$\bsr \C[z_1,\dots,z_n]\leq n+1,$$
(follows as in the proof for $\R[x_1,\dots,x_n]$  because the Krull dimension of
$\C[z_1,\dots,z_n]$ is also $n$ by Theorem \ref{krudim}) and 
$$\bsr \C[z_1,\dots,z_n] \geq \left\lfloor \frac{n}{2}\right\rfloor +1$$
(see \cite{ga}, \cite{gg}).
\end{remark}

  %%%%%%
  %%%%%%
  \section{The Krull dimension of $\R[x_1,\dots,x_n]$}
  
 We begin with  our own proof of  the Coquand-Lombardi result  
  concerning an elementary  characterization of the Krull dimension of a commutative unital ring.
  We avoid the algebraic tool of considering localized rings and explicitely construct
  (with the help of Zorn's Lemma) chains of prime ideals having the correct length.
  Our tool will be the following standard result, which we would like to present, too.

 \begin{lemma}[Krull]\label{krulllemma}
 Let $R$ be a commutative unital ring, $R\not=\{0\}$, and 
 $S$  a multiplicatively closed set in $R$ with $1\in S$. Suppose that
 $I$ is an ideal in $R$ with $I\inter S=\emp$. Then there exists a prime ideal
 $P$ with $I\ss P$ such that $P\inter S=\emp$.
 \end{lemma}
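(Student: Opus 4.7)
The plan is a classical Zorn's Lemma argument. I would first introduce the family
$$\mathcal F:=\{J\triangleleft R: I\ss J \text{ and } J\inter S=\emp\},$$
partially ordered by inclusion. This family is nonempty, since $I\in\mathcal F$ by hypothesis. To apply Zorn, I would verify that every totally ordered subfamily $\mathcal C\ss \mathcal F$ admits an upper bound, namely the union $U:=\bigcup_{J\in\mathcal C} J$. Because $\mathcal C$ is a chain, $U$ is easily checked to be an ideal containing $I$, and it is disjoint from $S$ because each $J\in\mathcal C$ is. Thus $U\in\mathcal F$, and Zorn's Lemma yields a maximal element $P\in\mathcal F$.

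The substantive step is to show that $P$ is prime. I would argue by contrapositive: fix $a,b\in R\setminus P$. Then the ideals $P+aR$ and $P+bR$ strictly contain $P$, so by maximality of $P$ in $\mathcal F$ both of them meet $S$. Pick
$$s_1=p_1+ar_1\in S,\sp s_2=p_2+br_2\in S,$$
with $p_1,p_2\in P$ and $r_1,r_2\in R$. Since $S$ is multiplicatively closed, $s_1 s_2\in S$. Expanding,
$$s_1s_2 = p_1p_2 + p_1br_2 + ar_1 p_2 + abr_1r_2,$$
where the first three summands lie in $P$. Consequently $s_1s_2\equiv abr_1r_2 \pmod P$. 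If we had $ab\in P$, then $s_1s_2\in P\inter S=\emp$, a contradiction. Hence $ab\notin P$, which is precisely the primality of $P$.

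The only delicate point is the primality verification; everything else is the standard Zorn packaging. In particular one has to be careful to use both that $S$ is multiplicatively closed (to form the product $s_1s_2$) and that $1\in S$ (implicitly, so that the original set-up makes sense and the full ring $R$ is never in $\mathcal F$, which guarantees that maximal elements are proper ideals). Once $P$ is shown to be prime with $I\ss P$ and $P\inter S=\emp$, the lemma follows.
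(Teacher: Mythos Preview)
Your proof is correct and follows essentially the same route as the paper: both set up the Zorn argument on the family of ideals containing $I$ and disjoint from $S$, take a maximal element $P$, and verify primality by passing to $P+aR$ and $P+bR$, picking elements of $S$ there, multiplying, and deriving a contradiction. The only cosmetic difference is that you phrase primality contrapositively (from $a,b\notin P$ deduce $ab\notin P$) whereas the paper assumes $fg\in P$ with $f,g\notin P$ and reaches a contradiction; the computations are identical.
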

 \begin{proof}
 Let $\mathscr V$ be the set of all ideals $J$ with $I\ss J$ and $J\inter S=\emp$.
 Then $\mathscr V\not=\emp$ because $I\in \mathscr V$, and $\mathscr V$  is partially ordered by set inclusion. If $\mathfrak C$ is any increasing chain
 in $\mathscr V$  then $\Union_{J\in \mathfrak C} J$ obviously is an ideal belonging to $\mathscr V$.
 Hence, by Zorn's Lemma, $\mathscr V$   admits a maximal element $P$.  Since $1\in S$
 and $S\inter P=\emp$, we obtain that $P\subset R$, the inclusion being strict.  Moreover, $I\ss P$.
 We claim that $P$ is a prime ideal.  For $f,g\in R$, let $fg\in P$ and suppose that neither $f$ nor $g$ belongs to $P$.
 Since $P$ is maximal in $\mathscr V$,  the ideals $P+fR$ and $P+gR$ meet $S$.
 Hence there exists $s,s'\in S$, $p,p'\in P$, and $r,r'\in R$ such that
 $$\text{$s=p+rf$ and $s'=p'+r'g$.}$$
 Multiplying both terms, we obtain
 $$ss'=pp'+ (r'g)p +(rf)p' + rr'(fg)\in P.$$
 Since $S$ is multiplicatively closed, $ss'\in  S$.  Hence $P\inter S\not=\emp$;
 a contradiction.  We conclude that $P$ is prime.
\end{proof}

  For $a,x\in R$ and $n\in \N$, let $L_{a,n,x}(y)=a^n(y+ax)$. If $a_j,x_j\in R$
and $n_j\in \N$ are given,
then we abbreviate $L_j(y):=L_{a_j,n_j,x_j}(y)$. 

\begin{lemma}\label{vor}
Let $Q_0$ and $Q_1$ be  ideals in a commutative unital ring $R$ with $Q_0\subset Q_1$ and let 
$a\in Q_1\setminus Q_0$. Suppose that  $Q_0$ is prime and that 
for $r,x\in R$ and $n\in \N$ we have 
$$a^n(r+ax)\in Q_0.$$
Then  $r\in Q_1$.
\end{lemma}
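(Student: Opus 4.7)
The plan is to exploit the primality of $Q_0$ together with the fact that $a\notin Q_0$ to strip off the factor $a^n$ from the hypothesis, then use $Q_0\subseteq Q_1$ and $a\in Q_1$ to finish.

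First I would handle the case $n=0$ as a triviality: there $a^n(r+ax)=r+ax\in Q_0\subseteq Q_1$, and since $ax\in Q_1$ (because $a\in Q_1$), we get $r=(r+ax)-ax\in Q_1$.

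For $n\geq 1$, the key observation is that $Q_0$ being prime and $a\notin Q_0$ forces $a^n\notin Q_0$ (a routine induction on $n$ using the definition of prime ideal: if $a\cdot a^{n-1}\in Q_0$ and $a\notin Q_0$, then $a^{n-1}\in Q_0$, contradicting the induction hypothesis). Applying primality once more to the product $a^n\cdot(r+ax)\in Q_0$, I conclude $r+ax\in Q_0$.

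With $r+ax\in Q_0\subseteq Q_1$ established and $ax\in Q_1$ because $a\in Q_1$, the conclusion $r=(r+ax)-ax\in Q_1$ is immediate. There is really no substantive obstacle here; the lemma is a bookkeeping device whose whole content is ``$Q_0$ is prime and $a\notin Q_0$, so $a^n$ can be cancelled modulo $Q_0$, and the remaining factor $r+ax$ already lives in $Q_1$ once we throw in the element $a$.''
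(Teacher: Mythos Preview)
Your proof is correct and follows essentially the same route as the paper's own argument: use primeness of $Q_0$ together with $a\notin Q_0$ to conclude $r+ax\in Q_0\subseteq Q_1$, then subtract $ax\in Q_1$. The paper simply compresses your two steps (showing $a^n\notin Q_0$ and then applying primeness) into one line, and does not separate out the case $n=0$.
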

\begin{proof}
Because $a\notin Q_0$, the primeness of $Q_0$ implies that $r+ax\in Q_0\ss Q_1$.
Since $a\in Q_1$, we conclude that $r\in Q_1$. 
\end{proof}

\begin{theorem}[Coquand-Lombardi]\label{clmain}
Let $R$ be a commutative unital ring, $R\not=\{0\}$.   For $N\in \N$, 
the following  assertions are equivalent:
\begin{enumerate}
\item [(1)] The Krull dimension of $R$ is at most $N$.
\item [(2)] For all $(a_0,\dots, a_{N})\in R^{N+1}$ there exists  
$(x_0,\dots, x_{N})\in R^{N+1}$ and 
$(n_0,\dots, n_{N})\in \N^{N+1}$ such that
\begin{equation}\label{rekursi}
 a_0^{n_0}\Biggl( a_1^{n_1} \Bigl(\cdots \bigl( a_{N}^{n_{N}}(1 +a_{N} x_{N})+\cdots\bigr)\Bigr) +a_0x_0\Biggr)=0,
\end{equation}
in other words 
\begin{equation}\label{krullgen}
L_0\circ\dots\circ L_N(1)=0.
\end{equation}
\end{enumerate}
\end{theorem}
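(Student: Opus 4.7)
The plan is to prove both implications by contraposition. For $(2)\imp(1)$ I would apply Lemma~\ref{vor} in a telescoping fashion; for $(1)\imp(2)$ I would explicitly build a chain of $N+1$ strict inclusions of prime ideals via repeated use of Krull's Lemma (Lemma~\ref{krulllemma}), once a certain multiplicative-closure property has been verified.

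For $(2)\imp(1)$: assume ${\rm Krd}\,R>N$, so that there is a prime chain $P_0\subset P_1\subset\cdots\subset P_{N+1}$, and pick $a_j\in P_{j+1}\setminus P_j$ for $j=0,\dots,N$. Applying~(2) to the tuple $(a_0,\dots,a_N)$ yields $x_j,n_j$ with $L_0\circ\cdots\circ L_N(1)=0$. Setting $y_{N+1}:=1$ and $y_j:=L_j(y_{j+1})$ we obtain $y_0=0\in P_0$. Since $y_0=a_0^{n_0}(y_1+a_0x_0)\in P_0$ and $a_0\in P_1\setminus P_0$, Lemma~\ref{vor} (with $Q_0=P_0$, $Q_1=P_1$) gives $y_1\in P_1$. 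Iterating --- using $a_j\in P_{j+1}\setminus P_j$ and the primeness of each $P_j$ --- we deduce $y_{j+1}\in P_{j+1}$ from $y_j\in P_j$; after $N+1$ applications we arrive at $1=y_{N+1}\in P_{N+1}$, contradicting the properness of $P_{N+1}$.

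For $(1)\imp(2)$: assume some tuple $(a_0,\dots,a_N)$ admits no $x_j,n_j$ solving \reff{krullgen}. I aim to construct a chain $P_0\subset P_1\subset\cdots\subset P_{N+1}$ of primes, which will force ${\rm Krd}\,R\geq N+1$. For $0\leq j\leq N$ put
$$\S_j:=\bigl\{L_j\circ L_{j+1}\circ\cdots\circ L_N(1):n_i\in\N,\;x_i\in R\bigr\},\sp\S_{N+1}:=\{1\}.$$
Each $\S_j$ contains $1$ (all $n_i=0$, $x_i=0$) and contains $a_j$ (in addition set $n_j=1$). A short induction on $N-j$ shows $\S_j$ is multiplicatively closed: two elements of $\S_j$ can be written as $a_j^{n_j}(y+a_jx_j)$ and $a_j^{n_j'}(y'+a_jx_j')$ with $y,y'\in\S_{j+1}$, and their product equals $a_j^{n_j+n_j'}(yy'+a_jz)$ for a suitable $z\in R$; by the inductive hypothesis $yy'\in\S_{j+1}$, so the product lies in $\S_j$. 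The assumption gives $0\notin\S_0$, and Krull's Lemma furnishes a prime $P_0$ with $P_0\inter\S_0=\emp$. Assume inductively that $P_k\inter\S_k=\emp$; then $(P_k+a_kR)\inter\S_{k+1}=\emp$, since any hypothetical $s=p+a_kr\in\S_{k+1}$ would give $s-a_kr=L_k(s)\in\S_k$ (via $n_k=0$, $x_k=-r$), contradicting $s-a_kr=p\in P_k$. Krull's Lemma now supplies a prime $P_{k+1}\supseteq P_k+a_kR$ with $P_{k+1}\inter\S_{k+1}=\emp$ (at $k=N$, disjointness from $\{1\}$ just means extending the proper ideal $P_N+a_NR$ to a maximal one). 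The inclusion $P_k\subset P_{k+1}$ is strict because $a_k\in\S_k\setminus P_k$ while $a_k\in P_{k+1}$. The main technical obstacle is the multiplicative closure of the $\S_j$; once that is established, the rest is careful bookkeeping around two applications of Krull's Lemma combined with Lemma~\ref{vor}.
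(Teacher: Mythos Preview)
Your proposal is correct and follows essentially the same route as the paper: both directions are argued by contraposition, using Lemma~\ref{vor} to telescope through the chain for $\neg(1)\Rightarrow\neg(2)$, and building the sets $\S_j$, verifying their multiplicative closure, and iterating Krull's Lemma for $\neg(2)\Rightarrow\neg(1)$. Your device of setting $\S_{N+1}=\{1\}$ is a harmless streamlining that absorbs the paper's separate final step (extending $P_N$ to a maximal ideal and checking $M\cap S_N\neq\emptyset$) into the same inductive pattern.
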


\begin{proof}
We show the contraposition of the assertion.\\

$\neg(1) \imp \neg (2)  $ ~~  Assume that the Krull dimension  of $R$ is at least $N+1$. Then 
$R$ admits a strictly increasing  ($N+1$)-chain
$$P_0\subset \dots  \subset P_N \subset P_{N+1}$$
of prime ideals $P_j$.  Choose $a_j\in P_{j+1}\setminus P_{j}$,  $j=0,\dots, N$.
If we suppose, contrariwise, that  \zit{krullgen} holds,
then $$a_0^{n_0}\Bigl(L_1\circ \dots\circ L_N(1)  +a_0x_0\Bigr)=0 \in P_0.$$
Hence, by Lemma \ref{vor}, $L_1\circ \dots\circ L_N(1)\in P_1$.  Now
$$L_1\circ \dots\circ L_N(1)= a_1^{n_1}\Bigl(L_2\circ \dots\circ L_N(1)  +a_1x_1\Bigr)\in P_1.$$
Hence, by Lemma \ref{vor},  $L_2\circ \dots\circ L_N(1)\in P_2$.
Continuing in this way, we deduce that 
$$a_{N}^{n_{N}}(1+a_{N}x_{N})= L_N(1)\in  P_{N}. $$
Hence, by Lemma \ref{vor}, $1\in P_{N+1}$; a contradiction.\\

$\neg(2) \imp \neg (1)  $ ~~   Suppose that there exists $\bs a=(a_0,\dots, a_N)\in R^{N+1}$
 such that for all
$\bs x\in R^{N+1}$ and $\bs n\in \N^{N+1}$ one has
 $$L_0\circ\dots \circ L_N(1)\not=0.$$
For $j=0,\dots, N$,  let 
 $$S_j=\Bigl\{L_j\circ \dots\circ L_N (1): \bs x\in R^{N-j+1}, \bs n\in \N^{N-j+1}\Bigr\}.$$
 Then $S_N\ss S_{N-1}\ss\dots\ss S_0$, or, what is the same, $S_0^c\ss S_{1}^c\ss\dots\ss S_N^c$
 where $S^c$ denotes the the complement of $S$.
 Note that $\{1, a_j,\dots, a_N\}\ss S_j$.  We claim that $S_j$ is multiplicatively closed.
 This follows by an inductive argument on $N-j$.  If $j=0$, then 
 $$S_N=\{a_N^{n_N}(1+a_Nx_N): x_N\in R, n_N\in \N\}$$
 is easily seen to be multiplicatively closed.  If for some $j$, $S_{N-j}$ is multiplicatively closed,
  then we use that
 $$L_{N-(j+1)}\circ L_{N-j}\circ \dots \circ L_N(1)= a_{\ssc N-(j+1)}^{\ssc n_{N-(j+1)}}
 \Bigl( L_{N-j}\circ \dots\circ L_N(1) + a_{\ssc N-(j+1)}x_{\ssc N-(j+1)}\Bigr)$$
 and observe that 
 $$a^n(s+ax)\cdot a^m(s'+a x')= a^{n+m}(ss' +a x'')$$
 where $x''= sx'+xs'+axx'$.

 {\bf Step1~~ } Looking at the zero ideal $I:=\{0\}$, and noticing that by assumption $0 \notin S_0$,
 Krull's Lemma \ref{krulllemma} tells us that  there
 exists a prime ideal
 $P_0$ with $P_0\inter S_0=\emp$, or in other words, $P_0\ss S_0^c$. 
 Now $a_0\in S_0$ implies that $a_0\notin P_0$. We claim that
\begin{equation}\label{erstes}
P_0\subset I_R(P_0, a_0) \ss S_1^c.
\end{equation}
 In fact, if the second inclusion does not hold,  then there is $p_0\in P_0$ and $x\in R$ such that
 $$p_0+xa_0=L_1\circ \dots\circ L_{N}(1) \in S_{1}\ss S_0.$$
 Hence
 \begin{eqnarray*}
p_0 &=&L_1\circ \dots\circ L_{N}(1) -xa_0\\
&=& a_0^0\Bigl(  L_1\circ \dots\circ L_N(1) -x a_0\Bigr)\in S_0.
  \end{eqnarray*}
Thus $p_0\in P_0\inter S_0$; a contradiction to the choice of $P_0$. Hence the inclusions 
\zit{erstes} hold.\\

{\bf Step 2~~} Now we apply Krull's Lemma again to get a prime ideal $P_1$ with
$$P_0 \subset  I_R(P_0, a_0)\ss  P_1\ss S_{1}^c.$$
Observe that $a_1\notin P_1$ because $a_1\in S_1$. We claim that
 \begin{equation}\label{zweite}
P_1\subset I_R(P_1, a_1) \ss S_2^c.
\end{equation}
 In fact, if the second inclusion does not hold, then there is $p_1\in P_1$ and $x\in R$ such that
 $$p_1+xa_1=L_2\circ \dots\circ L_{N}(1) \in S_{2}\ss S_1.$$
 Hence
 \begin{eqnarray*}
p_1 &=&L_2\circ \dots\circ L_{N}(1) -xa_1\\
&=& a_1^0\Bigl(  L_2\circ \dots\circ L_N(1) -x a_1\Bigr)\in S_1.
  \end{eqnarray*}
Thus $p_1\in P_1\inter S_1$; a contradiction to the choice of $P_1$. Hence the inclusions 
\zit{zweite} hold.\\

{\bf Step N~~} 
Continuing in this way,  we get a chain of prime ideals $P_j$, $j=0,\dots,N$, with 
$$P_0\subset P_1\subset\dots\subset P_N\ss S_N^c.$$
Observe that $a_N\notin P_N$ because $a_N\in S_N$. Hence $P_N$ is a proper ideal.
Therefore,  $P_N$ is contained in  a maximal ideal $M$. 
We claim that $M\inter S_N\not=\emp$. 
%%%
 In fact, if $a_N\in M$, then we are done. If $a_N\notin M$, then $I_R(a_N,M)=R$. In other words, 
 there is $x\in R$ and $m\in M$ such that $-a_Nx+m=1$; that is $1+a_Nx\in M$.
By the definition of  $S_N$, $1+a_Nx\in S_N$.  Hence $1+a_Nx\in M\inter S_N$. 
Thus $M\inter S_N\not=\emp$.\\

Hence $P_N\subset M$, the inclusion being strict,  and so  we have found a chain of prime ideals
of length $N+1$:
$$P_0\subset P_1\subset\dots\subset P_N\subset M.$$
We conclude that the Krull dimension of $R$ is at least $N+1$.
\end{proof}

The proof that the Krull dimension of $\R[x_1,\dots,x_n]$ is $n$ now works as in Coquand and Lombardi's paper:

\begin{proposition} [Coquand-Lombardi]\label{algindep}
Let $F$ be  a field and $R\not=\{0\}$ a commutative unital algebra over $F$. If any $(n+1)$-tupel 
$(f_0,\dots,f_n)\in R^{n+1}$ is algebraically dependent over $F$, 
that is, if  there is a non-zero polynomial $Q\in F[y_0,\dots, y_n]$ such that
 $Q(f_0,\dots, f_n)=0$,
 then the Krull dimension of $R$ is at most $n$.
\end{proposition}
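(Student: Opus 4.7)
The plan is to verify condition (2) of Theorem \ref{clmain} with $N=n$: given any tuple $(a_0,\dots,a_n)\in R^{n+1}$, I will produce exponents $n_j\in\N$ and elements $x_j\in R$ such that $L_0\circ\cdots\circ L_n(1)=0$. An inductive unwinding of the composition yields the explicit expansion
$$L_0\circ\cdots\circ L_n(1) \;=\; \prod_{j=0}^n a_j^{n_j} \;+\; \sum_{\ell=0}^n \Bigl(\prod_{k=0}^{\ell-1} a_k^{n_k}\Bigr) a_\ell^{n_\ell+1} x_\ell,$$
so the task reduces to writing $-\prod_{j=0}^n a_j^{n_j}$ as an $R$-linear combination of the elements $\prod_{k<\ell} a_k^{n_k}\cdot a_\ell^{n_\ell+1}$ for suitable exponents.

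To locate those exponents, I would invoke the algebraic-dependence hypothesis to fix a nonzero $Q\in F[y_0,\dots,y_n]$ with $Q(a_0,\dots,a_n)=0$, and then pick, among the finitely many monomials of $Q$, the one that is minimal with respect to the lexicographic order determined by $y_0>y_1>\cdots>y_n$. Denote this minimal monomial by $y_0^{n_0}\cdots y_n^{n_n}$, with coefficient $c\in F\setminus\{0\}$. The key combinatorial observation is that any other monomial $y_0^{m_0}\cdots y_n^{m_n}$ of $Q$ is strictly lex-larger, meaning there exists an index $j$ with $m_k=n_k$ for $k<j$ and $m_j\geq n_j+1$; hence that monomial is divisible by $y_0^{n_0}\cdots y_{j-1}^{n_{j-1}} y_j^{n_j+1}$. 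Grouping the non-minimal monomials according to the smallest such $j$ produces a decomposition
$$Q \;=\; c\prod_{j=0}^n y_j^{n_j} \;+\; \sum_{j=0}^n \Bigl(\prod_{k=0}^{j-1} y_k^{n_k}\Bigr) y_j^{n_j+1}\, Q_j(y_0,\dots,y_n)$$
for certain $Q_j\in F[y_0,\dots,y_n]$.

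Substituting $y_j\mapsto a_j$ and using $Q(a_0,\dots,a_n)=0$ gives
$$c\prod_{j=0}^n a_j^{n_j} \;=\; -\sum_{j=0}^n \Bigl(\prod_{k=0}^{j-1} a_k^{n_k}\Bigr) a_j^{n_j+1}\, Q_j(a_0,\dots,a_n).$$
Because $F\subseteq R$, the element $c$ is a unit in $R$; I would therefore set $x_j := c^{-1} Q_j(a_0,\dots,a_n) \in R$, which yields precisely the identity $L_0\circ\cdots\circ L_n(1)=0$. The conclusion then follows from Theorem \ref{clmain}.

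The only real obstacle is the combinatorial bridge between algebraic dependence and the Coquand-Lombardi criterion: lex-minimality of $\prod_j y_j^{n_j}$ inside $Q$ must force every other monomial to be divisible by some $\prod_{k<j}y_k^{n_k}\cdot y_j^{n_j+1}$. This is a direct but essential unwinding of the definition of the lex order, and it is precisely the place where the hypothesis ``every $(n+1)$-tuple is algebraically dependent'' is translated into the shape demanded by Theorem \ref{clmain}; once the decomposition is in hand, every remaining step is formal.
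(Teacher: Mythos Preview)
Your proof is correct and follows essentially the same route as the paper's: both pick the lexicographically minimal monomial of $Q$ (with $y_0$ most significant), factor every other monomial through $y_0^{n_0}\cdots y_{j-1}^{n_{j-1}}y_j^{n_j+1}$ at the first index $j$ where the exponents differ, and recognize the resulting decomposition as the identity \zit{rekursi} required by Theorem~\ref{clmain}. The only cosmetic differences are that the paper normalizes the leading coefficient to $1$ at the outset rather than dividing by $c$ at the end, and notes the slightly sharper fact that each $R_j$ lies in $F[f_j,\dots,f_n]$.
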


\begin{proof}
Let $Q(f_0,\dots,f_n)=0$ for some  non-zero polynomial $Q\in F[y_0,\dots, y_n]$.
   We assume that the monomials
are ordered lexicographically with respect to the powers $(i_0, i_1,\dots, i_n)\in \N^{n+1}$.
This means that $(i_0,i_1,\dots, i_n)\preceq (j_0, j_1,\dots, j_n)$
if either $i_0<j_0$ or if there is $m\in \{0,\dots, n-1\}$ such that $i_\nu=j_\nu$ for all $\nu$ with 
$0\leq \nu\leq m$ and $i_{m+1}< j_{m+1}$.
Let $$a_{i_0,\dots,i_n} f_0^{i_0} f_1^{i_1}\dots f_n^{i_n}$$ be the "first" monomial 
appearing in the relation above (here the coefficient $a_{i_0,\dots,i_n}$ belongs to $F$
and $(i_0,\dots, i_n)\in\N^{n+1}$). Without loss of generality we may assume that the
coefficient of this monomial is 1. 
Then  $Q(f_0,\dots,f_n)$ can be written as
\begin{eqnarray*}
Q=f_0^{i_0}\dots f_{n-1}^{i_{n-1}}f_n^{i_n} &+&  f_0^{i_0}\dots 
f_{n-1}^{i_{n-1}}f_n^{1+i_n}R_n + f_0^{i_0}\dots f_{n-1}^{1+i_{n-1}}R_{n-1}+\dots\\
&+&  f_0^{i_0} f_1^{1+i_1} R_1 + f_0^{1+i_0} R_0
\end{eqnarray*}
where $R_j$ belongs to $F[f_j, f_{j+1},\dots, f_n]$, $j=0,1,\dots, n$. 
Hence $Q$ has been written in  the form given by equation \ref{rekursi}
(with $a_j:=f_j$ and $x_j:=R_j$), that is
$$f_0^{i_0}\Biggl( f_1^{i_1} \Bigl(\cdots \bigl( f_{n}^{i_{n}}(1 +f_{n} R_{n})+\cdots\bigr)\Bigr) +f_0R_0\Biggr)=0.
$$
We conclude from Theorem \ref{clmain}, that the Krull dimension of  $R$ is at most $n$.
\end{proof}

In order to show that $\R[x_1,\dots, x_n]$ satisfies the assumptions of  Proposition \ref{algindep}
and to deduce its Krull dimension, we need some additional information.

\begin{proposition}\label{cooprime}
Let $R=F[x_1,\dots,x_n]$. Then the ideals 
$I_R(x_1)$, $I_R(x_1,x_2)$, \dots, $I_R(x_1,\dots, x_j)$ with $1\leq j\leq n$ are prime ideals.
\end{proposition}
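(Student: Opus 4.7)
The plan is to realize each ideal $I_R(x_1,\dots,x_j)$ as the kernel of a natural ``evaluation at zero'' homomorphism onto a polynomial ring in the remaining variables; primeness then follows immediately from the first isomorphism theorem together with the fact that $F[x_{j+1},\dots,x_n]$ is an integral domain.

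First I would fix $j$ with $1\leq j\leq n$ and define
$$\varphi_j: F[x_1,\dots,x_n] \longrightarrow F[x_{j+1},\dots,x_n], \qquad \varphi_j(p)=p(0,\dots,0,x_{j+1},\dots, x_n),$$
with the convention that for $j=n$ the target ring is just $F$. Using the standard evaluation principle for polynomial rings (or simply the distributivity of multiplication over the expansion into monomials), $\varphi_j$ is a unital ring homomorphism, and it is obviously surjective since it fixes every element of $F[x_{j+1},\dots,x_n]$.

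Next I would identify the kernel. The inclusion $I_R(x_1,\dots,x_j)\ss \ker \varphi_j$ is immediate because $\varphi_j(x_k)=0$ for $1\leq k\leq j$. For the reverse inclusion I would write any $f\in R$ by grouping monomials according to whether they involve some $x_k$ with $k\leq j$:
$$f(x_1,\dots, x_n) = f_0(x_{j+1},\dots,x_n) + \sum_{k=1}^j x_k\, h_k(x_1,\dots,x_n),$$
where $f_0$ collects precisely the monomials having zero exponent in each of $x_1,\dots,x_j$, and each remaining monomial is assigned (say, via its smallest-indexed variable of positive degree among $x_1,\dots,x_j$) to one of the sums $x_k h_k$. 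Since $\varphi_j(f)=f_0$, the condition $f\in\ker \varphi_j$ means $f_0=0$, and hence $f=\sum_{k=1}^j x_k h_k \in I_R(x_1,\dots,x_j)$.

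Finally, the first isomorphism theorem gives
$$R/I_R(x_1,\dots,x_j) \;\cong\; F[x_{j+1},\dots,x_n],$$
and the right-hand side is an integral domain (a polynomial ring over the field $F$, or $F$ itself when $j=n$). Thus $I_R(x_1,\dots,x_j)$ is a prime ideal. There is no real obstacle; the only step requiring any care is the kernel computation, which is a bookkeeping exercise in splitting off the monomials that are constant in $x_1,\dots,x_j$.
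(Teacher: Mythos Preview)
Your proof is correct and is essentially the same as the paper's: the paper also constructs the surjective homomorphism $R\to F[x_{j+1},\dots,x_n]$ (described there as ``take the coefficient $a_{0,\dots,0}$'' after viewing $R$ as a polynomial ring in $x_1,\dots,x_j$ over $F[x_{j+1},\dots,x_n]$, which is exactly your evaluation-at-zero map), identifies its kernel with $I_R(x_1,\dots,x_j)$ via the same monomial-splitting argument, and concludes primeness from the fact that the quotient is an integral domain.
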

\begin{proof}
We may assume that $1\leq j<n$, since the ideals $I_R(x_1,\dots,x_n)$ are
 maximal, hence prime.
First we observe that  $F[x_1,\dots, x_n]=F[\bs x] [x_1,\dots, x_j]$, the polynomial ring
 with indeterminates $x_1,\dots, x_j$ and coefficients from the ring $F[\bs x]$,
where $\bs x=(x_{j+1},\dots, x_n)$.  Then every $f\in R$ can uniquely be written as
$$f(x_1,\dots, x_n)= \sum a_{\ell_{1},\dots\ell_{j}} x_1^{\ell_1}\dots x_j^{\ell_j},$$
where $a_{\ell_{1},\dots, \ell_{j}}\in F[\bs x]$.

Let us now consider the surjective ring-homomorphism
$$h: \begin{cases} F[x_1,\dots,x_n] &\to  F[\bs x]  \\
f(x_1,\dots,x_n) &\mapsto a_{\underbrace{\sc 0,\dots, \sc 0}_{j}}(x_{j+1},\dots, x_n),
\end{cases}
$$
where $a_{\sc 0,\dots, \sc 0}(x_{j+1},\dots, x_n)$ is the coefficient of
the monomial $x_1^0\dots x_j^0$. Then the kernel of $h$ is the union of the zero-polynomial
with the set   of
polynomials in $F[x_1,\dots, x_n]$ 
all of whose  summands  contain at least one of the indeterminates
 $x_1,\dots,x_j$. \footnote{ For example in the case $j=2$ and $n=3$,  $x_1x_2\mapsto 0$, $x_1x_2x_3\mapsto 0 $   and $x_2+x_3\mapsto x_3$.} 
  
 We conclude that the kernel of $h$ coincides with the ideal $I:=I_R(x_1,\dots, x_j)$.
 Hence $R/I$ is isomorphic to $F[\bs x]$. Since $F[\bs x]$ is an integral domain,
 we conclude  that $I$ is a prime ideal.
\end{proof}

 \begin{proposition}\label{dimvector}
  Let $F$ be a field. The dimension of the $F$-vector space $V_m(x_1,\dots, x_n)$
   of all polynomials  $p\in F[x_1,\dots, x_n]$ with $\deg p\leq m$ is $\binom {n+m}{n}$.
\end{proposition}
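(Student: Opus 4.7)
The plan is to exhibit an explicit basis for $V_m(x_1,\dots,x_n)$ consisting of monomials of total degree at most $m$, and then reduce the problem to a combinatorial count.

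First, I would observe that every $p\in V_m(x_1,\dots,x_n)$ has a unique representation as a finite $F$-linear combination of monomials $x_1^{i_1}\cdots x_n^{i_n}$, by uniqueness of coefficients in $F[x_1,\dots,x_n]$. Since $\deg(x_1^{i_1}\cdots x_n^{i_n})=i_1+\dots+i_n$, the element $p$ lies in $V_m$ if and only if all monomials appearing with non-zero coefficient satisfy $i_1+\dots+i_n\leq m$. Hence
$$\mathcal{B}_m:=\{x_1^{i_1}\cdots x_n^{i_n}:(i_1,\dots,i_n)\in\N^n,\; i_1+\dots+i_n\leq m\}$$
is an $F$-basis of $V_m(x_1,\dots,x_n)$, and the dimension equals $\#\mathcal{B}_m$.

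Next I would reduce the count $\#\mathcal{B}_m$ to a standard combinatorial identity. One clean way is the ``extra slack variable'' trick: introduce $i_{n+1}:=m-(i_1+\dots+i_n)\in\N$, producing a bijection
$$\{(i_1,\dots,i_n)\in\N^n:i_1+\dots+i_n\leq m\}\;\longleftrightarrow\;\{(i_1,\dots,i_{n+1})\in\N^{n+1}:i_1+\dots+i_{n+1}=m\}.$$
The right-hand side is a classical stars-and-bars count equal to $\binom{n+m}{n}$, which finishes the proof.

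As an alternative, a short induction on $n$ also works: for $n=1$ the basis is $\{1,x_1,\dots,x_1^m\}$, giving $m+1=\binom{m+1}{1}$. For the induction step one splits according to the power of $x_n$,
$$V_m(x_1,\dots,x_n)=\bigoplus_{k=0}^m x_n^k\cdot V_{m-k}(x_1,\dots,x_{n-1}),$$
so that $\dim V_m(x_1,\dots,x_n)=\sum_{k=0}^m\binom{n-1+m-k}{n-1}$, and the hockey-stick identity $\sum_{j=0}^m\binom{n-1+j}{n-1}=\binom{n+m}{n}$ concludes the computation. There is essentially no obstacle here; the only tiny thing to be careful about is verifying the basis statement rigorously in the multivariate setting, which is however immediate from the construction of $F[x_1,\dots,x_n]$ as an iterated polynomial ring.
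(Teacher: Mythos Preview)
Your proof is correct. In fact, your ``alternative'' inductive argument is exactly the paper's proof: the paper also inducts on $n$, splits monomials according to the exponent of the last variable, and concludes with the hockey-stick identity $\sum_{j=0}^m\binom{n+m-j}{n}=\binom{n+1+m}{n+1}$.

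Your primary argument via the slack variable $i_{n+1}=m-(i_1+\dots+i_n)$ and stars-and-bars is a genuinely different and somewhat slicker route: it replaces the induction and the hockey-stick identity by a single bijection, going directly to the closed form $\binom{n+m}{n}$. The paper's approach has the mild advantage of being entirely self-contained (the hockey-stick sum is verified inline), whereas yours appeals to the standard stars-and-bars count; but both are equally valid and equally elementary.
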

\begin{proof}
We proceed by induction on $n$. If $n=1$, then 
$$V_m(x)=\left\{\sum_{j=0}^m a_j x^j: a_j\in F\right\}$$
has dimension $m+1$, which coincides with $\binom {1+m}{1}$.
Now suppose that the formula holds for all  $\nu$ with $1\leq \nu\leq n$.  
If $x_1^{j_1}\dots x_{n}^{j_n} x_{n+1}^{j_{n+1}}$ is a monomial with 
$\sum_{i=1}^{n+1} j_i\leq m$, then for fixed $j:=j_{n+1}\in \{0,1,\dots, m\}$
we necessarily must have  $\sum_{i=1}^n j_i\leq m-j$.
Hence, by induction hypothesis, we have $\binom{n+m-j}{n}$
possibilities to choose these exponents $j_1,\dots, j_n$.
On the whole, we have
$$L:=\sum_{j=0}^m  \binom{n+m-j}{n}=\binom{n}{n}+\binom{n+1}{n}+\dots+ \binom{n+m}{n}
$$ 
choices.  But,
 $L= \binom{n+1+m}{n+1}$. Thus we are done.
\end{proof}

\begin{remark}\label{abz}
We also obtain that there are exactly  $\binom {n+m}{n}$ tuples  $(j_1,\dots,j_n)\in \N^n$
with $\sum_{j=1}^n j_i\leq m$.
\end{remark}

The following result is due to Perron \cite{pe}. We present a proof given to us by
Witold Jarnicki. 

\begin{theorem}[Perron]\label{perron}
Let $p_1,\dots, p_{n+1}$ be polynomials in $F[x_1,\dots,x_n]$.  Then there exists
a non-zero polynomial $P\in F[y_1,\dots,y_{n+1}]$ in $n+1$ variables such that
$$P(p_1,\dots,p_{n+1})=0.$$
\end{theorem}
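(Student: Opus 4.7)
The plan is to use a dimension-counting argument on $F$-vector spaces, exploiting Proposition \ref{dimvector} which was proved precisely for this purpose (as suggested by its placement just above Perron's theorem).

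Let $d:=\max_{1\leq i\leq n+1}\deg p_i$ (we may assume $d\geq 1$, since if every $p_i$ is constant the statement is trivial). For each positive integer $N$, consider the $F$-linear substitution map
$$\phi_N: V_N(y_1,\dots,y_{n+1})\longrightarrow V_{Nd}(x_1,\dots,x_n),\sp Q\longmapsto Q(p_1,\dots,p_{n+1}).$$
First I would verify that this map is well-defined: a monomial $y_1^{j_1}\cdots y_{n+1}^{j_{n+1}}$ with $\sum j_i\leq N$ is sent to $p_1^{j_1}\cdots p_{n+1}^{j_{n+1}}$, whose degree is at most $d\sum j_i\leq Nd$, and linear combinations preserve this bound.

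Next I would compare dimensions using Proposition \ref{dimvector}:
$$\dim V_N(y_1,\dots,y_{n+1})=\binom{N+n+1}{n+1},\sp \dim V_{Nd}(x_1,\dots,x_n)=\binom{Nd+n}{n}.$$
The first binomial, viewed as a polynomial in $N$, has degree $n+1$ with leading term $N^{n+1}/(n+1)!$, while the second has degree $n$ with leading term $(Nd)^n/n!$. Hence for all sufficiently large $N$ we have $\dim V_N(y_1,\dots,y_{n+1})>\dim V_{Nd}(x_1,\dots,x_n)$, so $\phi_N$ cannot be injective and has a non-zero element $P$ in its kernel. Such a $P$ satisfies $P(p_1,\dots,p_{n+1})=0$, which is the claim.

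There is no substantive obstacle here: the argument is purely a dimension count, and the main thing to check carefully is the asymptotic comparison of the two binomial coefficients, which is elementary since the difference of degrees in $N$ between $\binom{N+n+1}{n+1}$ and $\binom{Nd+n}{n}$ is exactly one. The only modest subtlety is treating the edge case $d=0$ separately (then every $p_i\in F$ and an obvious linear polynomial $P$ works).
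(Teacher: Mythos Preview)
Your proof is correct and follows essentially the same dimension-counting argument as the paper: both compare $\binom{N+n+1}{n+1}$ against $\binom{Nd+n}{n}$ (the paper uses $k=d+1$ in place of your $d$, an immaterial difference) and conclude by the discrepancy in degrees as polynomials in $N$. Your phrasing via the linear map $\phi_N$ and its kernel is slightly more streamlined than the paper's, which separately treats the case where two monomial images coincide, but the underlying idea is identical.
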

\begin{proof}
We may assume that  the  polynomials $p_j$ are different from 0 (otherwise take 
$P(y_1,\dots,y_{n+1})= y_{n_0}$, where $p_{n_0}\equiv 0$.)
Let $k:=1+\max_{1\leq j \leq n+1} \deg p_j$. For big $L\in \N$, to be determined later,  
we are looking for 
$P\in F[y_1,\dots,y_{n+1}]$ with $0\leq \deg P\leq L$ and $P(p_1,\dots,p_{n+1})=0.$\\
Let  $V$ be  the vector space of all polynomials $p$ in $F[x_1,\dots,x_n]$ with $\deg p\leq kL$.
Then, by Proposition \ref{dimvector}, 
$$\dim V= \binom{kL+n}{n}=:A(L).$$
Consider now the  following collection $\mathcal C$  of polynomials:
$$ p_1^{j_1}\dots p_{n+1}^{j_{n+1}}:\; j_i\in \N, \;\sum_{i=1}^{n+1} j_i\leq L.$$
 Note that at this point we do not yet consider the {\bf set} of these  polynomials, because 
they may not be pairwise distinct.

 Each member of $\mathcal C$ 
   belongs to $V$,  because  for $p\in \mathcal C$,
 $$\deg  p\leq k(j_1+\dots +j_{n+1})\leq kL.$$
 If two members of $\mathcal C$ coincide, say 
 $$ p_1^{j_1}\dots p_{n+1}^{j_{n+1}}= p_1^{j_1^*}\dots p_{n+1}^{j_{n+1}^*},$$
where $(j_1,\dots, j_{n+1})\not= (j_1^*,\dots, j_{n+1}^*)$,  then  we let
 $$F(y_1,\dots,y_{n+1})=  y_1^{j_1}\dots y_{n+1}^{j_{n+1}}- 
 y_1^{j_1^*}\dots y_{n+1}^{j_{n+1}^*}$$
 and we are done. So let us assume that all the members of  $\mathcal C$ are distinct.
Let $S$ be the  set of all these members from $\mathcal C$. 
Then, by Remark \ref{abz}, 
 $${\rm card}\, S= \binom{L+n+1}{n+1}=: B(L).$$
Recall that $S\ss V$. 
We claim that $B(L)> A(L)$ for some $L$ (depending on $n$). \\
In fact, looking upon $B(L)$ and $A(L)$ as polynomials in $L$, we have that
$\deg B=n+1$  and $\deg A=n$. Thus, for large $L$, we obtain that $B(L)>A(L)$.\\
Thus the cardinal of set $S$ is strictly bigger than the dimension of the vector space $V$
it belongs to. Hence $S$ is a linear dependent set in $V$.  In other words,
there is a non-trivial linear combination of the elements from $S$ that is identically zero.
This implies that there is a non-zero polynomial $P\in F[y_1,\dots, y_{n+1}]$ of degree  at most $L$
such that $P(p_1,\dots, p_{n+1})=0$.
\end{proof}

%\newpage

\begin{theorem}\label{krudim}
If $F$ is a field then the Krull dimension of $F[x_1,\dots,x_n]$ is  $n$.
\end{theorem}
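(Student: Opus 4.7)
The plan is to prove the equality by establishing the two inequalities separately, using the machinery developed in the preceding pages.

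\textbf{Upper bound $\mathrm{Krd}\, F[x_1,\dots,x_n]\leq n$.} This is essentially a one-line combination of what has just been proved. By Perron's theorem (Theorem \ref{perron}), any $n+1$ polynomials $p_1,\dots,p_{n+1}\in F[x_1,\dots,x_n]$ are algebraically dependent over $F$, that is, there is a non-zero $P\in F[y_1,\dots,y_{n+1}]$ with $P(p_1,\dots,p_{n+1})=0$. Setting $R=F[x_1,\dots,x_n]$, this means $R$ satisfies the hypothesis of Proposition \ref{algindep} with the integer $n$, and so Proposition \ref{algindep} yields $\mathrm{Krd}\, R\leq n$ at once.

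\textbf{Lower bound $\mathrm{Krd}\, F[x_1,\dots,x_n]\geq n$.} Here I would simply exhibit an explicit chain of prime ideals of length $n$. Consider
$$\{0\}\subset I_R(x_1)\subset I_R(x_1,x_2)\subset\dots\subset I_R(x_1,x_2,\dots,x_n).$$
Every ideal in this chain is prime: the zero ideal is prime because $F[x_1,\dots,x_n]$ is an integral domain (any field is, and adjoining indeterminates preserves this), and the remaining $n$ ideals are prime by Proposition \ref{cooprime}. The inclusions are strict because $x_{j+1}\in I_R(x_1,\dots,x_{j+1})\setminus I_R(x_1,\dots,x_j)$ for $0\leq j\leq n-1$; indeed, any element of $I_R(x_1,\dots,x_j)$ has every monomial summand divisible by some $x_i$ with $i\leq j$, whereas $x_{j+1}$ is itself a monomial not of this form. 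Thus we have an $n$-chain of prime ideals, so $\mathrm{Krd}\, R\geq n$.

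\textbf{Anticipated obstacle.} Both bounds are essentially immediate given the preparatory results, so there is no serious obstacle left at this stage; the work has all been front-loaded into Perron's theorem, Proposition \ref{algindep}, and Proposition \ref{cooprime}. The only detail worth stating carefully is the strictness of the inclusions in the prime chain, which is what turns the chain into an actual $n$-chain and yields the matching lower bound. Combining the two bounds gives $\mathrm{Krd}\, F[x_1,\dots,x_n]=n$.
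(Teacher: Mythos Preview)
Your proof is correct and follows exactly the same approach as the paper: the upper bound via Perron's theorem and Proposition \ref{algindep}, and the lower bound via the explicit prime chain $\{0\}\subset I_R(x_1)\subset\dots\subset I_R(x_1,\dots,x_n)$ from Proposition \ref{cooprime}. You give a bit more detail on why the inclusions are strict and why $\{0\}$ is prime, which the paper leaves implicit, but the argument is the same.
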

\begin{proof}
By Perron's Theorem \ref{perron}, $R:=F[x_1,\dots,x_n]$ satisfies the assumption of 
Proposition \ref{algindep}. Hence the Krull dimension of $R$ is less than or equal to $n$.
By Proposition \ref{cooprime}, we have a chain of prime ideals
$$\{0\}\subset I_R(x_1)\subset  I_R(x_1,x_2)\subset \dots\subset I_R(x_1,\dots, x_n).$$
Since $\{0\}$ is a prime ideal too, this chain has length $n$.
Thus the Krull dimension of $R$ is $n$.
\end{proof}

  \section{Anderson's approach to Noether's  minimal prime theorem}
  
  To prove Bass' Theorem along the lines developed by Estes and Ohm \cite{eo},  we need 
  to collect in  the  following  classical Theorem by E. Noether some information on the abundance of minimal prime ideals in Noetherian rings.
We present a recent 
proof developed  by D. Anderson \cite{and}.

\begin{theorem}\label{miniprime}
Let $R$ be a Noetherian ring. Then the system, $\mathscr P_{min}$, 
 of prime ideals containing a given proper ideal $I\ss R$
and that are minimal (with respect to set inclusion) is a non-empty finite set. 
\end{theorem}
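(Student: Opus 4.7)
The plan is to split the statement into non-emptiness, which I will handle by a Zorn's Lemma argument in the spirit of Krull's Lemma \ref{krulllemma} and which does not require the Noetherian hypothesis, and finiteness, which I will handle by D.\ Anderson's Noetherian induction. For non-emptiness, Krull's Lemma \ref{krulllemma} applied with $S=\{1\}$ gives at least one prime $P$ with $I\ss P$, so the collection $\mathscr P(I)$ of primes above $I$ is non-empty; I would order $\mathscr P(I)$ by reverse inclusion and note that the intersection of any descending chain $\{P_\alpha\}$ in $\mathscr P(I)$ is again a prime above $I$ (if $ab$ lies in the intersection and $a\notin P_{\alpha_0}$, primeness of each $P_\beta\ss P_{\alpha_0}$ forces $b\in P_\beta$, and $b$ propagates to every $P_\gamma$ through any such $P_\beta$). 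Zorn's Lemma then yields a minimal element, so $\mathscr P_{min}\not=\emp$.

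For finiteness I argue by contradiction. Put
$$\mathscr F:=\{J\ss R:J\text{ is proper and has infinitely many minimal primes above it}\}$$
and suppose $\mathscr F\not=\emp$; the Noetherian hypothesis then provides a maximal element $I_0\in\mathscr F$. A prime ideal is its own unique minimal prime, so $I_0$ cannot be prime, and I can choose $a,b\in R\setminus I_0$ with $ab\in I_0$.

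The crucial claim is then that every minimal prime $P$ over $I_0$ is itself a minimal prime over one of $I_0+aR$ or $I_0+bR$. Since $ab\in P$ and $P$ is prime, I may assume $a\in P$, so $I_0+aR\ss P$; applying the non-emptiness step \emph{inside} $P$ (Zorn among primes $Q$ with $I_0+aR\ss Q\ss P$) produces a prime $Q\ss P$ that is in fact minimal over $I_0+aR$ in all of $R$, because any strictly smaller prime containing $I_0+aR$ would still lie in $P$ and violate the relative minimality. Then $I_0\ss Q\ss P$ with $P$ minimal over $I_0$ forces $Q=P$, proving the claim. Consequently, the infinitely many minimal primes of $I_0$ inject into the union of the minimal primes of $I_0+aR$ and those of $I_0+bR$, so at least one of these two ideals is proper, strictly contains $I_0$ (since $a,b\notin I_0$), and has infinitely many minimal primes above it, contradicting the maximality of $I_0$ in $\mathscr F$.

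The main obstacle I anticipate is precisely this relative version of the non-emptiness step: producing a minimal prime of $I_0+aR$ that actually lies inside a prescribed prime $P$. The resolution is to rerun the Zorn argument on the primes squeezed between $I_0+aR$ and $P$ and then observe that relative minimality there coincides with global minimality over $I_0+aR$. Once this local-to-global point is clear, Anderson's induction reduces to routine bookkeeping.
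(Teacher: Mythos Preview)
Your proof is correct, but it takes a genuinely different route from the paper's.

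The paper (following Anderson) works directly with the given ideal $I$: it forms the family $\mathfrak J$ of all finite products $P_1\odot\cdots\odot P_n$ of minimal primes of $I$ and splits into two cases. If some such product is contained in $I$, then every minimal prime over $I$ must equal one of the finitely many factors, and one is done. If not, one considers the set $\mathscr L$ of ideals $L\supseteq I$ that contain no member of $\mathfrak J$; using that every $J\in\mathfrak J$ is finitely generated (this is where the Noetherian hypothesis enters), Zorn produces a maximal $M\in\mathscr L$, which is then shown to be prime, and the existence of a minimal prime of $I$ inside $M$ gives the contradiction.

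Your argument instead runs Noetherian induction on the family $\mathscr F$ of ideals with infinitely many minimal primes: a maximal $I_0\in\mathscr F$ cannot be prime, so $ab\in I_0$ with $a,b\notin I_0$, and you show that each minimal prime of $I_0$ is already a minimal prime of $I_0+aR$ or of $I_0+bR$, pushing the infinitude to a strictly larger ideal. Your handling of the one delicate point---producing a minimal prime of $I_0+aR$ \emph{inside} a prescribed prime $P$---is clean and correct: relative minimality between $I_0+aR$ and $P$ automatically upgrades to global minimality.

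As for trade-offs: your approach is shorter and avoids the somewhat fiddly verification that the maximal $M\in\mathscr L$ is prime. The paper's approach, on the other hand, yields as a by-product the stronger statement that $I$ actually contains a finite product of its own minimal primes (Case~2 is shown to be impossible), which is sometimes useful in its own right.
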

\begin{proof}
Let $\mathscr P_I$ be the set of all prime ideals containing $I$. Then $\mathscr P_I\not=\emp$,
because there exists (using Zorn's Lemma) a maximal ideal $M$
 containing $I$.  Obviously $M$ is prime.  A second use of Zorn's Lemma   shows that 
 $\mathscr P_I$ also admits minimal elements. Hence $\mathscr P_{min}\not=\emp$. 
 \footnote{ For later purposes we note that, by the same reason,
  if $I$ and $P$ are ideals, $P$ prime and $I\ss P$, 
 then  there exist   minimal prime ideals  $P_{min}$ with 
 $I\ss P_{min}\ss P$.}\\ 

Next we consider the set $\mathfrak J$ of all ideals of the form
$$ P_1\odot \dots \odot P_n:=\Bigl\{\sum_{j=1}^m f_{1,j}\dots f_{n,j}:
 f_{k,j}\in P_k, \, m\in \N\bigr\}, $$
where $P_k\in \mathscr P_{min}$,  $n\in \N$. Since $R$ is a Noetherian ring,
every ideal in $\mathfrak J$ is finitely generated. \\

{\it Case 1}  If for some 
$J:= P_1\odot \dots \odot P_{n_0}\in \mathfrak J$ we have $J\ss I$,
then $ P_1 \cdots  P_{n_0}\ss I\ss P$ for every $P\in \mathscr P_{min}$. Hence, 
 the primeness of $P$ implies that there is $i_0\in \{1,\dots, n_0\}$
depending on $P$,  such that $P_{i_0}\ss P$  
%%%
(for if this is not the case, there exists for every $j\in \{1,\dots, n_0\}$ an element
$f_j \in P_j \setminus P$  with $f_1\cdots f_n \in I \ss  P$, 
contradicting the primeness of $P$).
Since $P$ is minimal, $P=P_{i_0}$.  Hence $\mathscr P_{min}=\{P_1,\dots, P_{n_0}\}$
and we are done.\\

{\it Case 2} 
Let us suppose that $J\not\ss  I$ for every $J\in \mathfrak J$. 
 The aim is to show that this case does not occur.
 Consider the set
$$\text{$\mathscr L:=\{L\ss R, \; L$ ideal,   $I\ss L$,  $J\not\ss L$ for each $J\in \mathfrak J\}$.}$$
Then $\mathscr L\not=\emp$ because, by assumption, $L:=I\in \mathscr L$. 
Moreover all ideals in $\mathscr L$ are proper, because $J\ss R$ for all $J\in \mathfrak J$.
With respect to set inclusion, $\mathscr L$ is partially ordered.  We claim that
\begin{equation}\label{maxiprim}
\mbox{$\mathscr L$ admits a maximal element $M$ and $M$ is automatically prime}.
\end{equation}
%%%
Suppose for the moment that this has been verified. Then,
using Zorn's Lemma,  there exists a minimal prime ideal $P$ over $I$
(hence $P\in \mathfrak J$)
with $I\ss P\ss M$. This is a contradiction, though, to the fact that $M\in \mathscr L$.
We conclude that this second case cannot occur.  
Hence, in view of the first case, $\mathscr P_{min}$ is finite.\\

Let us verify the two assertions in \zit{maxiprim}. To this end, 
let $\{L_\lambda:\lambda\in \Lambda\}$
be an increasing chain in $\mathscr L$. Let us show that
 $L:=\Union_\lambda L_\lambda\in \mathscr L$.
 
i)  $I\ss L$ is obviously satisfied and $L$ is an ideal because the chain is increasing.

ii) Let $J\in  \mathfrak J$. Note that $J$ is finitely generated, say $J=I_R(f_1,\dots, f_d)$, and that
$J\not\ss L_\lambda$ for any $\lambda$.
If we suppose (in view of  achieving a contradiction) that 
$J\ss  \Union_{\lambda\in \Lambda} L_\lambda=L$, then
$f_j\in L_{\lambda_j}$ for some $\lambda_j\in \Lambda$, $(j=1,\dots, d)$. 
The monotonicity of the chain implies that there exists $\lambda'$ such that $f_j\in L_{\lambda'}$
for all $j=1,\dots, d$. Thus $J\ss  L_{\lambda'}$, a contradiction to the hypothesis that 
$L_{\lambda'}\in \mathscr L$.

Thus we have shown that $\mathscr L$ is an inductive set and so, by Zorn's Lemma,
$\mathscr L$ admits a maximal element $M$.  In particular, $I\ss M$.  We claim that $M$ is prime.

To see this, we first observe that $M$ is proper since otherwise $J\ss M=R$ for 
every $J\in \mathfrak J$.  Now let $f,g\in R$ with $fg\in M$.  Suppose, to the contrary, 
that $f\notin M$ and $g\notin M$.
Since $M$ is a maximal element in $\mathscr L$, and $I\ss M$, there is $J_f\in \mathfrak J$
and $J_g\in \mathfrak J$ such that 
$$\text{$ J_f\ss I_R[f, M]$ and $J_g\ss I_R(g,M)$}.$$
By the definition of $\mathfrak J$, there exists  $P_i$ and $P_k$ in $ \mathscr P_{min}$
with $I\ss P_i\ss J_f$ and $I\ss P_k\ss J_g$.  We claim that  
$$P_i\cdot P_k\ss I_R(fg,M)\ss M.$$
In fact, if $p_i\in P_i$ and $p_k\in P_k$, then there are $x_i,x_k\in R$ and $m_i,m_k\in M$ such that
$$p_ip_k=(x_i f+m_i)(x_k g+m_k)= x_ix_k(fg)+ m_i(x_kg) +m_k(x_if) +m_im_k\in M.$$
Because $M$ is an ideal, $P_i\odot P_k\ss M$.  Thus we have found an element in $\mathfrak J$
that is contained in $M$. Since $M\in \mathscr L$, this is a contradiction.
This proves that $M$ is prime.
\end{proof}

%%%%%%%%
  
  \section{The Estes-Ohm approach}
  
  Here we present the approach  to Bass' Theorem given by Estes and Ohm \cite{eo}.
  
  \begin{lemma}\label{unionof2}
Let $R$ be a commutative ring and $I_0,I_1,I_2$  three ideals with $I_0\ss I_1\union I_2$.
Then  $I_0\ss I_1$ or $I_0\ss I_2$.
\end{lemma}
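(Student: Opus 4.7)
The plan is to argue by contradiction in a very direct fashion, exploiting only the fact that $I_1$ and $I_2$ are closed under addition and subtraction; no primeness is needed here (which is a pleasant feature compared with the classical prime avoidance for three or more prime ideals).

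First I would assume that neither $I_0\ss I_1$ nor $I_0\ss I_2$. Then I can pick witnesses $a\in I_0\setminus I_1$ and $b\in I_0\setminus I_2$. Since by hypothesis $I_0\ss I_1\union I_2$, the element $a$ must then lie in $I_2$ and $b$ must lie in $I_1$.

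Next I would consider the element $a+b$, which belongs to $I_0$ because $I_0$ is an ideal, hence by assumption it lies in $I_1\union I_2$. If $a+b\in I_1$, then subtracting $b\in I_1$ gives $a=(a+b)-b\in I_1$, contradicting the choice of $a$. Symmetrically, if $a+b\in I_2$, then $b=(a+b)-a\in I_2$, contradicting the choice of $b$. Either case being absurd, the initial assumption fails and the lemma follows.

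The only point that requires a moment of thought is the observation that this two-ideal version does not need any primeness assumption, so the proof is truly elementary; the main (and really only) obstacle is to remember to form the sum $a+b$ rather than to try to argue with products, since it is the additive closure of ideals that does the work here.
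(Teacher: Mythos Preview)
Your proof is correct and is essentially identical to the paper's own argument: both pick witnesses outside each of $I_1$ and $I_2$, form their sum, and derive a contradiction from closure under subtraction. The only cosmetic difference is that the paper invokes ``without loss of generality'' to treat a single case, whereas you spell out both cases explicitly.
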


\begin{proof}
Suppose that neither $I_0\ss I_1$ nor $I_0\ss I_2$.
Then there are $a_1\in I_0\setminus I_2\ss I_1$ and
 $a_2\in I_0\setminus I_1\ss I_2$. Since $I_0$ is an ideal, $s:=a_1+a_2\in I_0\ss I_1\union I_2$.
 Without loss of generality we may assume that $s\in I_1$. Then
 $ a_2=s-a_1\in I_1$; a contradiction.  We conclude that  $I_0\ss I_1$ or $I_0\ss I_2$.
\end{proof}

\begin{remark}
The assertion above does not hold  (in general) for unions of three (or more) ideals. In fact,
let $R$ be  a finite ring such that not all of its maximal ideals are principal. Let $I\ss R$ be  
 a non-principal maximal ideal. Then  
$N={\rm card}\; I\geq 4$ and  $I=\Union_{j=1}^N R x_j$. 
But of course, $I_j:=Rx_j $ does not contain $I$. A specific example is, for instance, the 
quotient ring $R=\Z_2[x,y]/ M$ where $M$ is the ideal generated by $x^2$, $xy$ and $y^2$.
When denoting the equivalence class   of $u\in \Z_2[x,y]$ by $\tilde u$, we have
$$R=\{ \tilde 0, \tilde 1, \tilde x,\tilde y, \tilde x+\tilde y, \tilde 1+\tilde x+\tilde y \},$$
and as $I$ we may take $I=I_R(\tilde x,\tilde y)$, which coincides with the set
$\{\tilde 0, \tilde x,\tilde y, \tilde x+\tilde y\}$.
\end{remark}

If one stipulates,  however,  that the ideals $I_j$ are prime, then one obtains the following
well-known result. For the readers' convenience  we present its proof, too.
 
  \begin{lemma}\label{unionofideals}
Let $R$ be a commutative ring and $I, P_1,\dots, P_m$  ideals in $R$ such that each
$P_j$ is prime and   
$$I\ss\Union_{j=1}^m P_j.$$
 Then $I\ss P_{j_0}$ for some $j_0\in \{1,\dots,m\}$. 

\end{lemma}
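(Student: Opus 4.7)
The plan is to proceed by induction on $m$, with the previous lemma (Lemma~\ref{unionof2}) handling the base case $m=2$; note that the case $m=1$ is trivial and the case $m=2$ did not even require primeness. For the inductive step, assume the conclusion holds for any family of $m-1$ prime ideals, and assume for contradiction that $I \not\subseteq P_j$ for every $j \in \{1,\dots,m\}$. The goal is to produce an element of $I$ that lies in none of the $P_j$, contradicting the hypothesis $I \subseteq \bigcup_j P_j$.

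First, for each fixed $j$, apply the induction hypothesis to the family $\{P_k : k \neq j\}$. Either $I \subseteq \bigcup_{k \neq j} P_k$, in which case by induction $I \subseteq P_{k_0}$ for some $k_0 \neq j$ and we are done, or there exists an element $x_j \in I$ with $x_j \notin P_k$ for every $k \neq j$. If some such $x_j$ additionally satisfies $x_j \notin P_j$, then $x_j \in I \setminus \bigcup_k P_k$, already a contradiction. Hence we may assume that for every $j$ there is $x_j \in I \cap P_j$ with $x_j \notin P_k$ for all $k \neq j$.

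Now comes the key construction, which is where primeness is essential. Consider the element
$$y := x_1 + x_2 \, x_3 \cdots x_m \in I.$$
For $j = 1$: since $x_k \notin P_1$ for every $k \geq 2$ and $P_1$ is prime, the product $x_2 x_3 \cdots x_m$ lies outside $P_1$; combined with $x_1 \in P_1$, this forces $y \notin P_1$. For $j \geq 2$: the factor $x_j$ lies in $P_j$, so $x_2 \cdots x_m \in P_j$, while $x_1 \notin P_j$, whence $y \notin P_j$. Therefore $y \in I$ but $y \notin \bigcup_{j=1}^m P_j$, the desired contradiction.

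The main conceptual step to highlight is the passage from $m=2$ to $m \geq 3$: the case $m=2$ works in any commutative ring without primeness, but starting from $m=3$ primeness is indispensable, as the remark following Lemma~\ref{unionof2} already illustrates. The only mild subtlety in writing this up cleanly is the dichotomy used when invoking the induction hypothesis with $m-1$ ideals; one must be careful to branch on whether $I$ is or is not contained in $\bigcup_{k \neq j} P_k$, since the former case directly yields the conclusion via induction, whereas the latter case provides the witnesses $x_j$ needed for the final product construction.
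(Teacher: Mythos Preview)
Your proof is correct and follows the standard prime-avoidance strategy, as does the paper's. Both argue by reduction on $m$ with Lemma~\ref{unionof2} as the base case, and both pivot on producing, for each index $j$, an element of $I$ lying in $P_j$ but in no other $P_k$. The difference is purely in the final combinatorial step: the paper forms the symmetric element $\sum_{j=1}^m \prod_{k\neq j} a_k$ and argues indirectly (this sum must land in some $P_j$, forcing a factor $a_k$ with $k\neq j$ into $P_j$, a contradiction), whereas you build the asymmetric element $y = x_1 + x_2 x_3 \cdots x_m$ and verify directly that $y$ avoids every $P_j$. Your variant is slightly more economical---the verification is a two-case check rather than a chain of implications---while the paper's version has the aesthetic advantage of treating all indices on an equal footing. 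Either construction is classical; nothing is missing from your argument.
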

\begin{proof}
If $m=1$, then nothing has to be shown.
If $m=2$, then the assertion holds by Lemma \ref{unionof2}.   So we may assume that $m\geq 3$. 
For $j\in \{1,\dots,m\}$, let 
$$Q_j=\Union_{k\not=j} P_k.$$
We claim that  there is $j$ such that $I\ss Q_j$.
Suppose, to the contrary,  that  for every $j$, $I\not\ss Q_j$. 
Then we may choose for $j\in \{1,\dots,m\}$ elements  $a_j\in I\setminus Q_j$.
Note that $I\setminus Q_j\ss P_j$.
Let
$$b_j=\prod_{k\not=j} a_k.$$
Then $b_j\in I$ and $b_1+\dots+ b_m\in I\ss \Union_{k=1}^m P_k$.
Hence there is $j\in \{1,\dots,m\}$ such that $b_1+\dots+ b_m\in P_j$.
Since  $b_k\in P_j$ for every $k\not=j$, we deduce  that 
$\sum_{k\not=j} b_k \in P_j$. Therefore $b_j\in P_j$.
Since $P_j$ is prime,  there is $k\not=j$ such that $a_k\in P_j\ss Q_k$. 
This is a contradiction to the choice of the elements $a_1,\dots, a_m$. We conclude that
$I\ss \Union_{k\not=j} P_k$ for some $j$. 

Now we proceed by backwards induction to reduce the number of  prime ideals
up to the case $m=2$. That case, though, is handled by Lemma  \ref{unionof2}.\\

\end{proof}

\begin{lemma}\label{eo1}
Let $P_j$ be prime ideals in a commutative unital ring $R$ with $P_j\not\ss P_k$ for $j\not=k$,
$j,k\in\{1,\dots, m\}$. Let $a, r\in R$. Then there exists $b\in R$ such that for all $j\in \{1,\dots,m\}$:
\begin{equation} \label{eorel}
\text {if  $a\notin P_j$ then $r+a\,b\notin P_j$}.
\end{equation}

\end{lemma}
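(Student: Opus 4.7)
The plan is to split the indices according to whether $r$ lies in $P_j$ or not, and then invoke Lemma \ref{unionofideals} (prime avoidance) together with the incomparability hypothesis to extract a suitable $b$.

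More precisely, I would set
\[
J:=\{j\in\{1,\dots,m\}: a\notin P_j\},\quad J_1:=\{j\in J: r\in P_j\},\quad J_2:=\{j\in J: r\notin P_j\},
\]
so that $J=J_1\sqcup J_2$. Observing that for $j\in J$ the primeness of $P_j$ gives $ab\in P_j\iff b\in P_j$, the membership of $r+ab$ in $P_j$ can be read off from two cases: for $j\in J_1$ it holds that $r+ab\notin P_j\iff b\notin P_j$; for $j\in J_2$ it suffices (though it is not necessary) that $b\in P_j$. Thus any
\[
b\in\Bigl(\,\Inter_{j\in J_2}P_j\Bigr)\setminus\Bigl(\,\Union_{j\in J_1}P_j\Bigr)
\]
yields the desired relation \zit{eorel}, with the convention that an empty intersection equals $R$ and an empty union equals $\emp$ (so if $J=\emp$ or $J_1=\emp$ one may simply take $b=0$).

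The core step is therefore to show that this set is non-empty when both $J_1$ and $J_2$ are non-empty. Suppose for contradiction that $\Inter_{j\in J_2}P_j\ss\Union_{j\in J_1}P_j$. Since every $P_j$ with $j\in J_1$ is prime, Lemma \ref{unionofideals} produces some $j_0\in J_1$ with
\[
\Inter_{j\in J_2}P_j\ss P_{j_0}.
\]
Now the intersection contains the product of the ideals $P_j$, $j\in J_2$, and the primeness of $P_{j_0}$ forces $P_k\ss P_{j_0}$ for some $k\in J_2$. Because $J_1$ and $J_2$ are disjoint one has $k\ne j_0$, contradicting the assumed incomparability $P_k\not\ss P_{j_0}$.

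The main obstacle is really this last step, i.e.\ verifying that the two-layer selection is consistent; once the avoidance argument is in place everything else is bookkeeping on the cases $j\in J_1$ versus $j\in J_2$ (and the trivial cases $J_1=\emp$ or $J_2=\emp$, where $b=0$ or any element produced directly by Lemma \ref{unionofideals} works). Finally, one should note that $j\notin J$ imposes no constraint, so the choice of $b$ made above verifies \zit{eorel} for every $j\in\{1,\dots,m\}$.
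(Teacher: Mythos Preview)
Your proof is correct and follows essentially the same route as the paper: partition the relevant indices according to whether $r\in P_j$, then use prime avoidance (Lemma~\ref{unionofideals}) together with the incomparability hypothesis to obtain some $b\in\bigl(\bigcap_{j\in J_2}P_j\bigr)\setminus\bigl(\bigcup_{j\in J_1}P_j\bigr)$. The only cosmetic differences are that the paper partitions all $m$ indices (not just those in $J$) and builds $b$ explicitly as a product $b=\prod_{j\in J_2}b_j$ with $b_j\in P_j\setminus\bigcup_{k\in J_1}P_k$, whereas you argue nonemptiness by contradiction; both versions rest on the same lemma and the same primeness-of-$P_{j_0}$ step.
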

\begin{proof}
If $r\in\Inter_{j=1}^m P_j$, then we may choose  for $b$ any element not in  $\Union_{j=1}^m P_j$.
If $r\notin \Union_{j=1}^m P_j$, then we take $b_j\in P_j$ and  let $b=\prod_{j=1}^m b_j$.
In the remaining cases, modulo a re-enumeration, we may  assume that $r\in\Inter_{j=1}^s P_j$ but 
$r\notin \Union_{j=s+1}^m P_j$. By Lemma \ref{unionofideals}, the hypothesis $P_j\not\ss P_k$ for $j\not=k$
implies that  $P_j\not\ss \Union_{k=1}^s P_k$ for each $j\in \{s+1,\dots, m\}$. Let
$b_j\in P_j\setminus  \Union_{k=1}^s P_k$, $j\geq s+1$, and let
$$b= b_{s+1}\cdot\, \cdots\,\cdot  b_{m}.$$
Then $b\in \Inter_{j=s+1}^m  P_j$. But $b\notin P_k$ for $1\leq k\leq s$, because otherwise
the primeness of $P_k$ implies that one of the factors $b_j$ with $s+1\leq j\leq m$ belongs to $P_k$,
a contradiction to the choice of $b_j$.

Fix $j_0\in \{1,\dots,m \}$ and assume  that $a\notin P_{j_0}$.
We claim that $r+ab\notin P_{j_0}$. In fact, assuming the contrary, let  $u:=r+ab\in P_{j_0}$.
If $1\leq j_0\leq s$, then  $r\in P_{j_0}$, hence $ab=u-r\in P_{j_0}$. The assumption on $a$ and 
the primeness of $P_{j_0}$ imply that $b\in P_{j_0}$, a contradiction. 
If $s+1\leq j_0\leq m$, then $b\in  P_{j_0}$. Hence $r=u-ab\in  P_{j_0}$;  this  is a contradiction
to the choice of $r$.
\end{proof}

  \begin{lemma}\label{eo2}
Let $R$ be a Noetherian ring.  Given $a, a_j\in R$, $j=0,1,\dots, s$, 
there exists $b_j\in R$, $j=1,\dots, s$,  such that for $i=1,2, \dots, s$, 
the (finite) set,  $\mathscr P_{i-1}$, of  minimal prime ideals $P$  containing 
$$I_R(a_0, a_1+b_1 a, \dots, a_{i-1}+ b_{i-1} a)\;  \footnote{ If $i=1$,
then we consider $I_R(a_0)$.} $$
has the following property:
\begin{equation}\label{notnot}
%\text{if $a_i'\in P$ then $a\in P$}.
\text{if $a\notin P$,  where $P\in \mathscr P_{i-1}$,  then $a_i+ b_i a\notin  P$}.
\end{equation}
\end{lemma}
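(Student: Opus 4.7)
The plan is to build $b_1,\dots,b_s$ inductively, invoking Lemma \ref{eo1} at each stage with a finite, pairwise incomparable family of prime ideals delivered by Noether's theorem. The previous two lemmas were designed exactly for this iteration: Theorem \ref{miniprime} supplies \emph{finiteness}, and Lemma \ref{eo1} supplies the arithmetic avoidance.

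I would proceed by induction on $i\in\{1,\dots,s\}$. Assume $b_1,\dots,b_{i-1}$ have already been constructed (no hypothesis at $i=1$), so that the ideal $I_{i-1}:=I_R(a_0,a_1+b_1a,\dots,a_{i-1}+b_{i-1}a)$ is determined. If $I_{i-1}=R$ then $\mathscr{P}_{i-1}=\emptyset$ and property \reff{notnot} is vacuous, so we may pick $b_i$ arbitrarily. Otherwise, since $R$ is Noetherian, Theorem \ref{miniprime} guarantees that $\mathscr{P}_{i-1}$ is a non-empty \emph{finite} set of prime ideals, say $\mathscr{P}_{i-1}=\{P_1,\dots,P_{m_{i-1}}\}$.

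Next I would note that the primes in $\mathscr{P}_{i-1}$ are pairwise incomparable: if $P_j\subseteq P_k$ with $j\neq k$, then $P_j$ would be a prime containing $I_{i-1}$ that is strictly smaller than $P_k$, contradicting the minimality of $P_k$. Thus the incomparability hypothesis of Lemma \ref{eo1} is met, and applying that lemma with $r:=a_i$ and the given $a$ produces $b_i\in R$ such that for every $P_j\in\mathscr{P}_{i-1}$, the implication ``$a\notin P_j\Rightarrow a_i+b_i a\notin P_j$'' holds. This is precisely \reff{notnot} at stage $i$.

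The proof is essentially a clean orchestration of the two tools just established: Noether's finiteness theorem (using the Noetherian hypothesis) and the prime-avoidance Lemma \ref{eo1} (using the automatic incomparability of minimal primes). The only conceptual subtlety — and not a genuine obstacle — is that the sets $\mathscr{P}_{i-1}$ depend on the previously chosen $b_1,\dots,b_{i-1}$, which is why the $b_i$ must be selected one at a time rather than simultaneously.
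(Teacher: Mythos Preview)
Your proof is correct and follows essentially the same inductive scheme as the paper: at each stage apply Theorem~\ref{miniprime} to obtain a finite set of minimal primes over the current ideal, then invoke Lemma~\ref{eo1} with $r=a_i$ to produce $b_i$. You even supply two details the paper leaves implicit, namely the vacuous case $I_{i-1}=R$ and the verification that minimal primes are pairwise incomparable so that the hypothesis of Lemma~\ref{eo1} is satisfied.
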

\begin{proof}
To prove the assertions via induction, we will use several times Lemma \ref{eo1}.
 
\underline{$i=1$}:  Recall that $\mathscr P_0$ is the class of all minimal 
prime ideals $P$ with $I_R(a_0)\ss P$.
By Theorem \ref{miniprime},  $\mathscr P_0$ is finite; say $\mathscr P_0=\{P_1,\dots, P_t\}$.
To apply Lemma \ref{eo1}, we let $r=a_1$. This gives $b_1\in R$ such that for all 
$\nu\in \{1,\dots, t\}$
 $$\text{$a_1':=a_1+b_1a\notin P_\nu$ whenever $a\notin P_\nu$.}$$
Thus \zit{notnot} is satisfied.\\
Now let us suppose that $b_1,\dots, b_i$  have been constructed and that  \zit{notnot} is satisfied for some $i\in \{1,\dots, s-1\}$. Let $a_j':=a_j+ b_j a$, where $j\in \{1,\dots, i\}$.

\underline{$i\to i+1$}:    By definition,  $\mathscr P_i$ is the class of all minimal 
prime ideals $P$ with 
$$I_R(a_0,a_1',\dots, a_{i}')\ss P.$$
By Theorem \ref{miniprime},  $\mathscr P_i$ is finite. 
To apply Lemma \ref{eo1}, we let $r=a_{i+1}$. This gives $b_{i+1}\in R$ such that
for all $P\in \mathscr P_i$:
 $$ \text{$a'_{i+1}:=a_{i+1}+b_{i+1}a\notin  P$ whenever $a\notin  P$.}$$
Thus \zit{notnot} is satisfied for $i+1$.
\end{proof}

  \begin{theorem}[Bass] \label{bass}
Let $R$ be a Noetherian ring with Krull dimension less than or equal to $n$. 
Then $\bsr R\leq n+1$.
\end{theorem}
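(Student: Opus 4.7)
The strategy is: given a unimodular $(n+2)$-tuple $(c_0,c_1,\dots,c_n,c)\in U_{n+2}(R)$, to construct $b_0,\dots,b_n\in R$ such that the $(n+1)$-tuple $(c_0+b_0c,\dots,c_n+b_nc)$ is unimodular. The whole argument rests on a single application of Lemma~\ref{eo2} with the \emph{padded} assignment
\[
a_0:=0,\qquad a_i:=c_{i-1}\;\;(i=1,\dots,n+1),\qquad a:=c,\qquad s:=n+1.
\]
This yields $b_1,\dots,b_{n+1}\in R$; setting $d_j:=c_j+b_{j+1}c$ for $j=0,\dots,n$, it suffices (after relabelling $b_j:=b_{j+1}$) to prove that $J:=I_R(d_0,\dots,d_n)=R$.

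To establish $J=R$, I argue by contradiction. If $J\subsetneq R$, then since each $c_j=d_j-b_{j+1}c$ lies in $J+Rc$, the unimodularity of $(c_0,\dots,c_n,c)$ forces $J+Rc=R$. By Theorem~\ref{miniprime}, $J$ admits a minimal prime $P_{n+1}$, and necessarily $c\notin P_{n+1}$ (otherwise $R=J+Rc\subseteq P_{n+1}$). Now I descend inductively from $P_{n+1}$: given $P_{j+1}$, the footnote of Theorem~\ref{miniprime} supplies a minimal prime $P_j$ of $I_R(a_0,a_1+b_1c,\dots,a_j+b_jc)$ contained in $P_{j+1}$; automatically $c\notin P_j$, so property~\zit{notnot} of Lemma~\ref{eo2} gives $a_{j+1}+b_{j+1}c\notin P_j$, while $a_{j+1}+b_{j+1}c\in P_{j+1}$. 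Hence the inclusion $P_j\subseteq P_{j+1}$ is strict. Running $j$ from $n$ down to $0$ yields a chain
\[
P_0\subsetneq P_1\subsetneq\dots\subsetneq P_{n+1}
\]
of length $n+1$ in $R$, contradicting ${\rm Krd}\,R\leq n$.

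The main subtlety I expect is the padded choice $a_0:=0$ together with $s:=n+1$. A naive application of Lemma~\ref{eo2} with $a_0:=c_0$ and $s:=n$ would yield only a chain of length $n$, fully consistent with ${\rm Krd}\,R\leq n$ and producing no contradiction. Setting $a_0=0$ collapses the initial ideal $I_R(a_0)$ to $(0)$, whose minimal primes are exactly the minimal primes of $R$; property~\zit{notnot} at level $i=1$ then provides the extra strict inclusion at the bottom of the chain that boosts its total length from $n$ to $n+1$. A small technical check is that Lemma~\ref{eo2} remains applicable with $a_0=0$: its inductive proof uses Theorem~\ref{miniprime} at each level, and $(0)$ is indeed a proper ideal since $R\neq\{0\}$, so the construction goes through unchanged.
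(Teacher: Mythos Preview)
Your proof is correct and follows essentially the same route as the paper's: both set $a_0=0$, invoke Lemma~\ref{eo2} with $s=n+1$, assume the ideal generated by the modified elements is proper, and then descend through minimal primes (via the footnote in Theorem~\ref{miniprime}) to build a strict chain $P_0\subset\cdots\subset P_{n+1}$ contradicting the Krull bound. Your explicit discussion of why the padding $a_0=0$ is needed to gain the extra strict inclusion is a welcome clarification the paper leaves implicit.
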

\begin{proof}
This is the proof given by Estes and Ohm \cite{eo}.
Let $\bs a:=(a_1,\dots,a_{n+1}, a)\in U_{n+2}(R)$. We have to show that $\bs a$ is reducible.
Choose $a_0=0$. Associate with $a_j$ the elements  $b_j$ comming from Lemma \ref{eo2},
$j=1,\dots, n+1$, and let $a_j'=a_j+b_ja$. For $i=1,\dots, n+1$, consider the ideals
$$I_i:=I_R (a_0, a_1',\dots, a_{i}'),$$
and let $\mathscr P_i$ be the (finite) set of minimal prime ideals $P$ with $I_i\ss P$.
 
Note that the reducibility of $\bs a$ is a consequence  to the assertion that $I_{n+1}=R$.  
Suppose, to the contrary, that $I_{n+1}$ is a proper ideal. 

  We claim that
\begin{equation}\label{a}
\mbox{$a\notin P$ for every $P\in \mathscr P_{n+1}$}
\end{equation}
In fact, if we suppose that $a\in P$ for some $P\in \mathscr P_{n+1}$,
then  the invertibility of $\bs a$ implies that 
$$P\supseteq I_R(a_1',\dots,a_{n+1}', a)=I_R(a_1,\dots, a_{n+1}, a)=R,$$
a contradiction to the fact that prime ideals are proper ideals.  Hence assertion \ref{a} holds.

Consider now the following chain of ideals:
$$I_R(a_0)\ss I_R(a_0, a_1')\ss \dots\ss I_R(a_0,a_1',\dots, a_{n}')\ss I_R(a_0,a_1',\dots,a_{n+1}')=I_{n+1}\subset R.$$

We shall construct a chain of prime ideals that has length $n+1$ 
(in other words $n+2$ elements) which
will yield a contradiction to the assumption that the Krull dimension of $R$ is 
less than or equal to $n$.

Let $P_{n+1}\in \mathscr P_{n+1}$. 
Choose a minimal prime ideal $P_n\in \mathscr P_n$
with 
$$I_R(a_0,a_1',\dots, a_{n}') \ss P_n \ss P_{n+1}.$$
Backwards induction yields  minimal prime ideals $P_i$ with
$$I_R(a_0,a_1',\dots, a'_{i})\ss P_i\ss P_{i+1}\ss\dots\ss P_{n+1}$$ 
($i=1,\dots,n$), and finally
a minimal prime ideal $P_{0}$ with 
$$I_R(a_0)\ss P_0\ss P_1.$$
We claim that all the inclusions in the chain
$$P_0\subset P_1\subset\dots\subset P_{n}\subset P_{n+1}$$
are strict. To do so, we use Lemma \ref{eo2}. Fix $i\in \{1,\dots, n+1\}$ and
consider ${I_{i-1}=I_R(a_0,a_1',\dots, a'_{i-1})}$,   with the convention that
$I_{0}=I_R(a_0)$.
We first observe that $a\notin P_{i-1}$,
since otherwise $a\in P_{n+1}$, a contradiction to \zit{a}.

Hence, by Lemma \ref{eo2}, $a_i'\notin P_{i-1}$. But by construction, $a'_i\in P_i$.
Thus $P_{i-1}\subset P_i$, the inclusion being strict.  Hence, under the assumption that
$I_{n+1}$ is proper, we have shown that the Krull dimension of $R$ is at least $n+1$.
This contradicts the hypothesis. Consequently, $I_{n+1}=R$.  Thus, as already mentioned,
$\bs a\in U_{n+2}(R)$ is reducible. Hence $\bsr R\leq n+1$.
\end{proof}

To conclude this section, let us mention the following generalization of Bass' Theorem
given by R. Heitmann  \cite{hei}. It shows that the Noetherian condition can be dropped.
\begin{theorem}
Let $R$ be a commutative unital ring  with Krull dimension $n$. If $R$ is
an integral domain,  then $\bsr R\leq n+1$.
If, on the other hand,  $R$ has zero-divisors, then $\bsr R \leq n+2$.
\end{theorem}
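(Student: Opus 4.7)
The plan is to extend the Estes--Ohm proof of Theorem \ref{bass} by dispensing with the Noetherian hypothesis. The crucial tool used there was Noether's minimal prime theorem (Theorem \ref{miniprime}), which guaranteed finitely many minimal primes over any ideal and powered Lemmas \ref{eo1} and \ref{eo2}. In its place I would use the Coquand--Lombardi characterization of Krull dimension (Theorem \ref{clmain}), which is valid in any commutative unital ring and packages the dimension bound into a single polynomial identity rather than a finite list of primes.

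For the integral domain case, I take a unimodular tuple $(a_0,\dots,a_n,a)\in U_{n+2}(R)$ and apply Theorem \ref{clmain} to the $(n+1)$-tuple $(a_0,\dots,a_n)$. This supplies exponents $\nu_j\in\N$ and elements $x_j\in R$ with $L_0\circ\dots\circ L_n(1)=0$, where $L_j(y)=a_j^{\nu_j}(y+a_jx_j)$. Because $R$ is a domain, the nested identity can be progressively simplified: at each outer layer, cancellation of the non-zero factor $a_j^{\nu_j}$ (or handling the degenerate case $a_j=0$ directly) peels off one level of nesting. I would then combine the successively simplified relations with the B\'ezout equation $\sum_{j=0}^n u_ja_j+ua=1$ witnessing unimodularity, and read off reducers $b_0,\dots,b_n$ with $(a_0+b_0a,\dots,a_n+b_na)\in U_{n+1}(R)$. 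In spirit, the C--L identity is doing the work of the recursive prime-avoidance procedure of Lemmas \ref{eo1} and \ref{eo2}, but entirely algebraically.

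For the zero-divisor case, my plan is to reduce to the integral domain case modulo a minimal prime ideal $P$ of $R$, whose existence is guaranteed by Krull's Lemma \ref{krulllemma}. The quotient $R/P$ is an integral domain with Krull dimension at most $n$, and by the first case satisfies $\bsr R/P\leq n+1$. Given $(a_0,\dots,a_{n+1},a)\in U_{n+3}(R)$, its image in $R/P$ can be reduced to an $(n+2)$-tuple in $R/P$, and any lift to $R$ yields a tuple whose unimodularity holds only modulo $P$. The extra slot (ending at $n+2$ rather than $n+1$) is precisely what is needed to absorb the defect: since $P$ consists entirely of zero-divisors, a suitable further adjustment using one of the remaining components allows the modulo-$P$ unimodularity to be promoted to genuine unimodularity in $R$.

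The principal obstacle I foresee is the lifting step in the zero-divisor case: a tuple that is unimodular modulo a minimal prime $P$ need not be unimodular in $R$, and the correction must be constructed without any global control over the (possibly infinite) family of all minimal primes. This is where Heitmann's refinement is genuinely needed, and where a careful choice of $P$, together with an auxiliary application of Theorem \ref{clmain} to the lifted tuple, seems to be indispensable. A secondary difficulty is that the nested identity $L_0\circ\dots\circ L_n(1)=0$ couples all $a_j$ and $\nu_j$ simultaneously, so producing explicit $b_j$ in the integral domain case requires a careful top-down unwinding that tracks exactly which cancellations the domain hypothesis legitimizes.
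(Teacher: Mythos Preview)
The paper does not prove this theorem; it merely quotes it from Heitmann \cite{hei} as a generalization of Bass' Theorem, without giving any argument. So there is no ``paper's own proof'' to compare against, and your proposal must be judged on its own merits.

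For the integral domain case, your idea of replacing the Noetherian minimal-prime machinery by the Coquand--Lombardi identity (Theorem \ref{clmain}) is in the right spirit and is, in fact, close to how later constructive proofs proceed. However, the decisive step---``read off reducers $b_0,\dots,b_n$''---is not carried out. Cancellation in a domain lets you strip the outermost factor $a_0^{\nu_0}$ from $L_0\circ\cdots\circ L_n(1)=0$, yielding $L_1\circ\cdots\circ L_n(1)=-a_0x_0$; but the next layer gives $a_1^{\nu_1}\bigl(L_2\circ\cdots\circ L_n(1)+a_1x_1\bigr)=-a_0x_0$, and you can no longer divide. So the ``progressive simplification'' you describe stalls after one step, and producing explicit $b_j$ from this identity together with the B\'ezout relation requires a genuinely new manipulation that your sketch does not supply.

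For the zero-divisor case, the approach via a \emph{single} minimal prime $P$ has a real gap. Passing to $R/P$ and reducing there gives an $(n+2)$-tuple that is unimodular only modulo $P$; you then need to promote this to unimodularity in $R$. You correctly identify this as the principal obstacle, but your proposed fix---``$P$ consists entirely of zero-divisors, so a suitable further adjustment\ldots''---is not an argument. There may be infinitely many minimal primes, and an element lying in one of them need not be controlled by the others; nothing in your outline explains why one extra coordinate suffices to repair the defect simultaneously for all of them. Heitmann's original argument does not proceed by quotienting modulo one prime, and the later constructive proofs that do use Theorem \ref{clmain} work directly with the identity in $R$ rather than by descent from a quotient. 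As written, the zero-divisor half of your proposal is a statement of the difficulty rather than a plan for overcoming it.
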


%%%%%%%%%%%%%%%

\section{The topological stable rank of $\R[x_1,\dots,x_n]$}

\begin{definition} 
Let $R$ be a ring endowed with  a topology $\mathcal T$ (we do not assume that the topology
is compatible with the algebraic operations $+$ and $\cdot$).
 The {\it topological stable rank}, ${\rm tsr}_{\mathcal T}R$, of $(R,\mathcal T)$ is the least integer
  $n$ for which $U_n(R)$ is dense in $R^n$, or infinite if no such $n$ exists.  
\end{definition}
If the ring  $R$ is endowed with two topologies $\mathcal T_1$ and $\mathcal T_2$ such that
$\mathcal T_1$ is weaker than $\mathcal T_2$, then 
$$\tsr_{\mathcal T_1} R \leq \tsr_{\mathcal T_2} R.$$

For the ring of polynomials, we work with the  topology of uniform convergence.

\begin{theorem}\label{tsrp}
The topological stable rank of $\R[x_1,\dots, x_n]$ is $n+1$.
\end{theorem}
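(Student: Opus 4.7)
The plan is to establish the two inequalities $\tsr \R[x_1,\dots,x_n] \geq n+1$ and $\tsr \R[x_1,\dots,x_n] \leq n+1$ separately, using two witnesses supplied by earlier results in the paper.

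For the lower bound I would take the specific $n$-tuple $(x_1,\dots,x_n)$ and show that it admits no uniform approximation on $\R^n$ by elements of $U_n(\R[x_1,\dots,x_n])$. Indeed, if $(p_1,\dots,p_n)$ satisfies $\sup_{\R^n}|p_j-x_j|<\varepsilon$ for each $j$, then each polynomial $p_j-x_j$ is bounded on $\R^n$ and must therefore be constant (a short induction on the number of variables: restricting to any coordinate line yields a bounded one-variable polynomial, which is constant in that variable, and iterating kills every non-constant monomial). Thus $p_j = x_j+c_j$ with $|c_j|<\varepsilon$, so the tuple vanishes at $(-c_1,\dots,-c_n)\in\R^n\subset\C^n$, contradicting invertibility by Theorem \ref{bezpolreal}.

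For the upper bound I would perturb an arbitrary $(n+1)$-tuple by small real constants. Given $(p_1,\dots,p_{n+1})\in\R[x_1,\dots,x_n]^{n+1}$ and $\varepsilon>0$, Perron's Theorem \ref{perron} supplies a non-zero polynomial $P\in\R[y_1,\dots,y_{n+1}]$ with $P(p_1,\dots,p_{n+1})\equiv 0$. The map $\bs c \mapsto P(-c_1,\dots,-c_{n+1})$ is a non-zero polynomial in $\R[c_1,\dots,c_{n+1}]$, so its real zero set has Lebesgue measure zero and its complement is dense in $\R^{n+1}$; pick $\bs c\in\R^{n+1}$ with $\max_j|c_j|<\varepsilon$ and $P(-c_1,\dots,-c_{n+1})\neq 0$. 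I claim that the perturbed tuple $(p_1+c_1,\dots,p_{n+1}+c_{n+1})$ has no common zero in $\C^n$: any such common zero $\bs z_0$ would yield $p_j(\bs z_0)=-c_j$ for every $j$, and hence $0=P(p_1(\bs z_0),\dots,p_{n+1}(\bs z_0))=P(-c_1,\dots,-c_{n+1})\neq 0$, a contradiction. By Theorem \ref{bezpolreal} the perturbed tuple is invertible, and it lies within $\varepsilon$ of the original in the uniform norm on $\R^n$.

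The crucial non-trivial ingredient is Perron's theorem, which confines the image of the map $\bs z\mapsto(p_1(\bs z),\dots,p_{n+1}(\bs z))$ to a proper real algebraic hypersurface of $\C^{n+1}$; avoiding this hypersurface by a small real constant shift is then a purely measure-theoretic exercise. Everything else -- bounded polynomials on $\R^n$ are constant, real zero sets of non-zero real polynomials have Lebesgue measure zero, and the B\'ezout criterion -- is already at our disposal from the previous sections, so I expect no serious technical obstacle beyond remembering to apply Perron in the right form.
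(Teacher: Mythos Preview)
Your proposal is correct. The upper bound argument is essentially identical to the paper's: both invoke Perron's Theorem~\ref{perron} to trap the image of $(p_1,\dots,p_{n+1})$ inside the zero set of a non-zero polynomial $P$, and then perturb by a small real constant vector lying outside that zero set. The paper phrases the avoidance step as ``$P$ cannot vanish identically on the ball $B(\bs 0,\e)$'' (via partial derivatives at the origin), while you phrase it as ``the real zero set of $P$ has Lebesgue measure zero''; these are interchangeable.

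For the lower bound, however, you take a genuinely different and more elementary route. The paper, after observing that $|x_j-f_j|<\e$ forces $x_j-f_j$ to be constant, explicitly declines to use this fact and instead works on the unit sphere $S_{n-1}$ in the Banach algebra $C(S_{n-1},\R)$: it invokes a matrix-exponential result to write $(x_1,\dots,x_n)^t=(\exp H)(f_1,\dots,f_n)^t$, extends $H$ by Tietze, and contradicts Brouwer's theorem. You instead exploit the constancy of $p_j-x_j$ directly, noting that $(x_1+c_1,\dots,x_n+c_n)$ vanishes at $(-c_1,\dots,-c_n)\in\C^n$ and so cannot be invertible. Your argument is shorter and avoids Brouwer entirely, but it is specific to polynomial rings; the paper's argument, as it remarks, is kept deliberately general so as to apply to non-polynomial function rings where a uniformly close approximant need not differ by a constant.
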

\begin{proof}

We first prove that 
\begin{equation}\label{tsrmajr}
\tsr \R[x_1,\dots,x_n]\leq n+1.
\end{equation}
Let $\bs p:=(p_1,\dots, p_{n+1})$ be an $(n+1)$-tuple  in  $\R[x_1,\dots,x_n]$.
Now we look upon $\bs p$ as being an $(n+1)$-tuple in $\C[z_1,\dots,z_{n}]$.
Choose, according to Perron's Theorem \ref{perron}, a non-zero polynomial $P$  over $\C$ with
$n+1$ indeterminates such that  $$P(p_1,\dots, p_{n+1})=0.$$
Then $P$, looked upon as a polynomial function, vanishes identically on the 
image $\bs p(\C^n)$. 
But $P$ cannot vanish identically on the ball
$$B(\bs 0,\e):=\{\bs x=(x_1,\dots,x_{n+1})\in \R^{n+1}: ||\bs x||_2\leq \e\}, $$
since otherwise $P$ would be the zero-polynomial (just consider the partial derivatives
at the origin). Since $\e>0$ can be chosen arbitrarily
small, we obtain a null-sequence $(\bs\e_k)$  in $\R^{n+1}$ such that $\bs\e_k\notin \bs p(\C^n)$.
Hence the $(n+1)$-tuple $\bs p-\bs \e_k$
is invertible in $C(\C^n,\C)$.  From Theorem \ref{bezpolreal}
%By Hilbert's Nullstellensatz \ref{hil}, $\bs p-\bs e_k$ is an invertible
%&$(n+1)$-tuple in $\C[z_1,\dots,z_n]$, from  which 
we deduce that $\bs p-\bs \e_k$ is in $U_{n+1}(\R[x_1,\dots,x_{n}])$.
Since $\bs p-\bs \e_k$ uniformly approximates $\bs p$, we  are able  to conclude that 
$\tsr \R[x_1,\dots,x_n]\leq n+1$.\\

Next we show that $\tsr \R[x_1,\dots,x_n]\geq n+1$. \medskip
 
 Consider the identity map $\bs x=(x_1,\dots,x_n)$ of $\R^n$ onto $\R^n$. Note that 
 $$\bs x\in \underbrace{\R[x_1,\dots,x_n]\times \dots\times \R[x_1,\dots,x_n]}_{n-times}.$$
 Suppose that there exist invertible $n$-tuples in $\R[x_1,\dots,x_n]$ that uniformly approximate
 $\bs x$. That is, for every $\e>0$ there is $\bs f=(f_1,\dots,f_n)\in U_n(\R[x_1,\dots,x_n])$ such that
 $|x_j-f_j|<\e$ for $j=1,\dots, n$.  This implies of course that $x_j-f_j$ is a
 constant (we keep this generality though, since it  also works for non-polynomial rings).
 In particular, this inequality then holds on the unit sphere $S_{n-1}$. Let $A$ be  the
 Banach algebra $A=C(S_{n-1},\R)$ of all continuous, real-valued functions on $S_{n-1}$,
 endowed with the supremum norm.
 We obviously have that   $\bs f \in U_n(A)$.
 
  By a classical theorem in the theory of Banach algebras (\cite{pal} or \cite{ru})
  there exists a matrix $H\in M_n(A)$ with
 $$(x_1,\dots, x_n)^t=( \exp H)\, (f_1,\dots,f_n)^t$$
 whenever $\e$ is chosen small enough. Extending the entries of $H$ with the help of
 Tietze's Theorem  to continuous functions  on $\R^n$ we  obtain
 a zero-free extension of $\bs x|_{\partial \mathbf B_{n}}$ to $\mathbf B_{n}$.
As above (see the proof of Theorem \ref{mainbsrpoly}),  
this contradicts  Brouwer's fixed point theorem.
\end{proof}

\section{Acknowledgements}  

We thank
Leonhard Frerick for providing us the reference \cite{plo}; 
Witold Jarnicki for the proof of Theorem \ref{perron};
Amol Sasane for the reference \cite{and}
as well as Peter Pflug  and Thomas Schick for valuable comments yielding to a  proof 
of  $\tsr \R[x_1,\dots,x_n]\leq n+1$
  in Theorem \ref{tsrp}.

 \end{document}